\documentclass[12pt]{amsart}

\usepackage[utf8]{inputenc} 
\synctex=1
\usepackage[active]{srcltx}
\usepackage{a4wide}
\usepackage{amsthm,amsfonts,amsmath,mathrsfs,amssymb}
\usepackage{dsfont}
\usepackage{mathtools}
\usepackage[T1]{fontenc}
\usepackage[utf8]{inputenc}
\usepackage{enumerate}
\usepackage{comment}
\usepackage[left=4cm,top=4cm,right=4cm,bottom=4cm]{geometry}
\setlength{\textheight}{20cm} \textwidth16cm \hoffset=-2truecm
\numberwithin{equation}{section}

\usepackage{pgf,tikz}
\usepackage{pgfplots}

\usepackage{geometry}
\usepackage{graphicx}
\usepackage{amssymb}
\usepackage{amsmath}
\usepackage{amsthm}
\usepackage{listings}
\usepackage[colorlinks=true,urlcolor=blue,citecolor=blue,linkcolor=blue]{hyperref}
\usepackage{esint}
\usepackage{xcolor}

\usepackage{etex}
\usepackage[all]{xy}
\usepackage{pgf,tikz}
\usetikzlibrary{arrows}
\usetikzlibrary{patterns}
\usepackage{hyperref}
%%%%%%%%%%%%%%%%%%   Greek letters

%%%%%%%%%%%%%%%%%%%%%%%%%%
%%%%% 	MACROS
%%%%%%%%%%%%%%%%%%%%%%%%%%
\newcommand{\R}{{\mathbb R}} 
\newcommand{\C}{{\mathbb C}} 
\newcommand{\N}{{\mathbb N}}

\newcommand{\D}{{\mathbb D}}

\renewcommand{\Re}{\mathrm{Re}}
\renewcommand{\Im}{\mathrm{Im}}

\newtheorem{theorem}{Theorem}[section]
\newtheorem{lemma}[theorem]{Lemma}

\newtheorem{corollary}[theorem]{Corollary}

\theoremstyle{definition}
\newtheorem{definition}[theorem]{Definition}
\newtheorem{example}[theorem]{Example}

\theoremstyle{remark}
\newtheorem{remark}[theorem]{Remark}

\numberwithin{equation}{section}

%%%%%%%%%%%%%

%\counterwithin{theorem}{chapter}

%\newcommand{\comment}[1]{}

%%%%%%%%%%%%%%%%%%%%%
%%%%%%%%%%%%%%%%%%%%%

\makeatletter
\@namedef{subjclassname@2020}{%
  \textup{2020} Mathematics Subject Classification}
\makeatother

\title[Uniform convergence of continuous semigroups]{On the uniform convergence of continuous semigroups}
\date{\today}
\subjclass[2010]{Primary 37F44, 37C10, 30C35, 30B50, 30K10, 30D05, 47D03.}
\keywords{Semigroups of holomorphic functions; Rate of uniform convergence; Dirichlet series; Gordon-Hedenmalm class}
\thanks{This research was supported in part by Ministerio de Ciencia, Innovaci\'on y Universidades (project PID2022-136320NB-I00/AEI/10.13039/501100011033/FEDER, UE)  and 
by Junta de Andaluc{\'i}a, projects P20-00664, FQM133 and FQM-104.}

\author[M.D. Contreras]{Manuel D. Contreras}
\address{Departamento de Matem\'atica Aplicada II and IMUS, Escuela T\'ecnica Superior de Ingenier\'ia, Universidad de Sevilla,
	Camino de los Descubrimientos, s/n 41092, Sevilla, Spain}
\email{contreras@us.es}

\author[C. G\'omez-Cabello ]{Carlos G\'omez-Cabello}
\address{Departamento de Matem\'atica Aplicada II and IMUS, Edificio Celestino Mutis, Avda. Reina Mercedes, s/n. 41012 - Sevilla, Universidad de Sevilla,
Sevilla, Spain}
\email{cgcabello@us.es}

\author[L. Rodr\'iguez-Piazza]{Luis Rodr\'iguez-Piazza}
\address{Departmento de An\'alisis Matem\'atico and IMUS, Facultad de Matem\'aticas, Universidad
	de Sevilla, Calle Tarfia, s/n 41012 Sevilla, Spain}
\email{piazza@us.es}

\begin{document}

\maketitle
\begin{abstract} Let $\Omega$ be a region in the complex plane $\mathbb C$ and let  $\{\Phi_t \}_{t\ge 0}$ be a 
continuous semigroup of functions on $\Omega$; that is, $\Phi_t\colon \Omega\to\Omega$
is holomorphic for every $t\ge 0$, $\Phi_0(z)=z$, for every $z\in\Omega$, $\Phi_t\circ\Phi_s=\Phi_{s+t}$, for every
$s$, $t\ge 0$, and 
\begin{equation*}\label{eso}
\Phi_t(z)\to z\,,\quad\hbox{ as $t$ goes to $0^+$,}
\end{equation*}
uniformly on compact subsets of $\Omega$. Despite this definition
only requires  the uniform convergence on compact subsets, P. Gumenyuk proved in 2014 that, when $\Omega$ is the unit disc, the convergence is uniform on the whole $\mathbb D$.
In this paper, we enhance Gumenyuk's result by proving that for every continuous semigroup $\{\Phi_t \}_{t\ge 0}$ on $\mathbb D$ we have
$$
\sup_{z\in\mathbb D} |\Phi_t(z)-z|= O(\sqrt t), \quad t\to 0^+.
$$
In addition, we provide an example showing that $O(\sqrt t)$ is the best possible rate of uniform convergence valid for all semigroups on $\mathbb D$.

When $\Omega$ is the right half-plane $\C_{+}$, we consider semigroups $\{\Phi_{t}\}$ with $\infty$ as its Denjoy-Wolff point. It is not difficult to  show that Gumenyuk's result is no longer true for these semigroups. 
Our second result characterizes when such continuous semigroups converges uniformly to the identity, as $t$ goes to zero, in terms of their infinitesimal generators.
Namely, this convergence holds if and only if the infinitesimal generator of the semigroup is bounded in the  half-plane $\{z\in \C:\, \Re z>1\}$. In this case, we can also prove that the rate of convergence is again $O(\sqrt{t})$, as $t$ goes to zero.

An example of application of this result is when the semigroup is included in the Gordon-Hedenmalm class
(the one which produces bounded composition operators on Hardy spaces of Dirichlet series), where we always have uniform convergence.

An important ingredient in the proofs of these results is the use of harmonic measures, which we have done 
through a classic result of M. Lavrentiev. 
\end{abstract}
\tableofcontents
\section{Introduction}

The study of continuous semigroups in  the unit disc $\D$ and in the right half-plane $\C_{+}$ started in the early 1900s. The work \cite{porta}, appeared in 1978, due to Berkson and Porta, meant a renewed interest in the study of continuous semigroups.  Let us recall that, given a domain $\Omega$ in the complex plane, a continuous semigroup of holomorphic self-maps of $\Omega$ --or simply a  continuous semigroup in $\Omega$-- $\{\Phi_t\}$  is a continuous homomorphism of the real semigroup $[0,+\infty)$ endowed with the Euclidean topology to the semigroup under composition of holomorphic self-maps of $\Omega$ endowed with the topology of uniform convergence on compacta.
It was proved in \cite{porta} that a continuous semigroups are  exactly the flows of a holomorphic semicomplete vector field $G$ on the domain $\Omega$; that is, the functions $t\mapsto \Phi_{t}(z)$ are the solutions of the Cauchy problems $\dot{x}(t)=G(x(t)), x(0)=z$, for all $z\in \Omega$.  

 In the last 40 years, the number of new results about this topic has grown significantly. Its connection with dynamics has been explored in a number of papers, just to mention  some recent of them we cite \cite{Bet15a,BCD,BCDGZ,BCK,pavel}.  The state of the art can be seen in \cite{manoloal}. It is also worth mentioning other in-depth connection with operators theory via semigroups of composition operators in Banach spaces of analytic functions (see, e.g., \cite{Avicou,porta,BCDMPS,ChaPar,Contreras-Diaz}).

 Analyzing the semigroups of composition operators in the disc algebra,
D\'{\i}az-Madrigal and the first author \cite{Contreras-Diaz} proved that when the functions of the semigroup have a continuous extension to the boundary of the unit disc, then the functions $\Phi_{t}$ converge to the identity, as $t$ goes to $0$, uniformly in the unit disc.
In a remarkable paper published in 2014, P. Gumenyuk \cite[Proposition 3.2]{pavel} removed the restriction of the continuous extension to the boundary and proved that given a semigroup $\{\Phi_{t}\}$ in the unit disc, the convergence to the identity, as $t$ goes to $0$, is not only uniform on compacta but also uniform in whole the unit disc. In this paper we improve such a result showing that

\begin{theorem}\label{conunifelip}
Let $\{\Phi_t\}$ be a continuous semigroup of analytic functions in the unit disc $\D$. Then, there exists $C>0$ such that
\begin{equation}
    |\Phi_t(z)-z|\leq C\sqrt{t},
    \end{equation}
    for all $z\in\D$ and for all $t>0$.
\end{theorem}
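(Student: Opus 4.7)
The proof would use the Berkson--Porta representation of the infinitesimal generator $G$ together with harmonic-measure estimates via a classical theorem of Lavrentiev. Writing $G(z) = (\bar\tau z - 1)(z-\tau) P(z)$ with $\tau \in \overline{\mathbb{D}}$ the Denjoy--Wolff point and $\Re P \geq 0$, the first factor is bounded by $4$ on $\mathbb{D}$ and the Herglotz representation yields $|P(z)| \leq C_P/(1-|z|)$; hence
\begin{equation*}
|G(z)| \leq \frac{C_G}{1-|z|}, \qquad z \in \mathbb{D},
\end{equation*}
for a constant depending only on the semigroup. The plan is then to split the analysis according to whether $z$ is far from $\partial \mathbb{D}$ on the scale $\sqrt{t}$.

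For $z$ with $1 - |z| \geq A\sqrt{t}$ (with $A$ large enough), I would start from $\Phi_t(z) = z + \int_0^t G(\Phi_s(z))\, ds$ and track $u(s) := 1 - |\Phi_s(z)|^2$. Differentiating and using the estimate on $|G|$ gives $|u'(s)| \lesssim 1/u(s)$, so $|d(u^2)/ds| \lesssim 1$, which upon integration yields $u(t)^2 \geq u(0)^2 - C\, t$. The choice of $A$ forces $u(s) \gtrsim u(0) \gtrsim 1-|z|$ for all $s \in [0, t]$, and hence
\begin{equation*}
|\Phi_t(z) - z| \leq \int_0^t \frac{C_G\, ds}{1 - |\Phi_s(z)|} \leq \frac{C'\, t}{1-|z|} \leq \frac{C'}{A}\sqrt{t}.
\end{equation*}

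For $z$ with $1 - |z| < A\sqrt{t}$ the ODE argument fails, because $\Phi_s(z)$ can race along $\partial \mathbb{D}$ and no pointwise lower bound on $1 - |\Phi_s(z)|$ is available. Here I would turn to harmonic measure. Since $\Phi_t - \mathrm{id}$ lies in $H^\infty(\mathbb{D})$, the maximum principle identifies its supremum with the essential sup of its non-tangential boundary values $\Phi_t^*(\zeta) - \zeta$, so the problem reduces to bounding $|\Phi_t^*(\zeta) - \zeta|$ uniformly in $\zeta$. Lavrentiev's classical theorem relates boundary Euclidean length to harmonic-measure length on simply connected domains; applied to a conformal image associated with the semigroup trajectory, it allows one to upgrade the $O(t)$ infinitesimal flow estimate into an $O(\sqrt{t})$ boundary displacement, completing the uniform bound.

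The main obstacle is this second case. Producing the $\sqrt{t}$ rate when $z$ is within $\sqrt{t}$ of $\partial \mathbb{D}$ requires extracting a global geometric estimate from harmonic-measure considerations rather than from a pointwise ODE bound, and this is where Lavrentiev's theorem enters. Handling the possible absence of a continuous extension of $\Phi_t$ to $\partial \mathbb{D}$, choosing the right harmonic-measure geometry, and reconciling the constants from the two regimes into a single uniform bound are the delicate technical points of the argument.
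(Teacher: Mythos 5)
Your proposal correctly identifies the two main ingredients the paper uses---the Berkson--Porta representation of the generator with the Herglotz bound $|p(z)|\leq M/(1-|z|)$, and Lavrentiev's theorem via harmonic measure---but it only actually carries out the easy half of the argument and leaves the genuinely difficult half as a high-level wish, which you yourself acknowledge in your closing paragraph.

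Your ``far from boundary'' case, controlling $u(s)=1-|\Phi_s(z)|^2$ by the differential inequality $|u'|\lesssim 1/u$ and integrating to get $u(t)^2\geq u(0)^2-Ct$, is a clean and correct argument and is genuinely different from what the paper does; the paper never makes this particular split, but rather conjugates the whole problem to a half-plane (via $-\log z$ when the Denjoy--Wolff point is interior, and via the Cayley transform when it is on $\partial\D$) and then relies on the half-plane result Theorem~\ref{convunifsem}. Your ODE argument is shorter and more elementary for that regime.

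The gap is the ``near boundary'' case, and it is the whole theorem. Two specific problems with what you outline. First, the reduction to non-tangential boundary values $\Phi_t^*(\zeta)$ buys nothing: $\Phi_t^*(\zeta)$ is only defined a.e., is not continuous in general, and is no easier to control pointwise than $\Phi_t(z)$ for $z$ near $\partial\D$; the paper never passes to boundary values, it works with an interior base point throughout. Second, ``apply Lavrentiev to a conformal image associated with the semigroup trajectory'' is not an argument until one says which domain, which reference point, and which boundary set. The paper's construction is quite specific: after conjugating to $\C_+$, it takes the trajectory $\gamma(u)=\Psi_u(w_0)$ on $[0,t]$, replaces it by the monotonized curve $\alpha(u)=\Re\gamma(u)+i\min_{s\geq u}\Im\gamma(s)$ (Lemma~\ref{lemapre2}), uses $\alpha$ together with three sides of a square of side $a=|\Im\Psi_t(w_0)-\Im w_0|$ to bound a Jordan domain $\Omega$ containing the disc $D(w,a/4)$ about the square's center $w$, verifies $\ell(\partial\Omega)\leq 4a$ and $\omega_\Omega(w,\alpha^*)\geq 1/4$, invokes Corollary~\ref{Lavrentievnoi} to show the part of $\alpha^*$ where $|H|$ is small has harmonic measure $<1/8$, and then reaches a contradiction by integrating the positive harmonic function $\Re\sqrt{H}$ over $\partial\Omega$ and comparing with the pointwise bound $\Re\sqrt{H(w)}\leq\sqrt{|H(w)|}$.

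There is also a quantitative issue you would run into: the crude estimate $|G(z)|\leq C_G/(1-|z|)$ is \emph{not} the right input in the non-elliptic case. The paper's Lemma~\ref{lemapre3} gives the sharper bound $|H(z)|\leq M/\lambda$ on the horodisc $D(\lambda,1-\lambda)$, which after the Cayley transform becomes $|G_1(w)|\leq M\bigl((1+x)^2+y^2\bigr)/(2x)$; this is crucially \emph{not} of the form $K/\Re w$, and the non-elliptic argument in the paper has to be run by hand (rather than quoting Theorem~\ref{convunifsem}) precisely to accommodate the growth in $|w|$. Your uniform $1/(1-|z|)$ bound discards the factor $(z-1)^2$ in the Berkson--Porta formula and would not close the harmonic-measure contradiction when the trajectory is far from the Denjoy--Wolff point along $\partial\D$.
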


In Example \ref{Ex:cotaoptima}, it is shown that this control is sharp in the sense that the function $\sqrt t$ cannot be replaced by any function $f$ such that $f(t)=o(\sqrt{t})$, as $t$ goes to $0^{+}$. 

Gumenyuk's theorem does not hold in the right half-plane (see Example \ref{falsosem}). 
This example raises the more general question about the semigroups in $\C_{+}$ which happen to converge uniformly to identity in such half-plane and, in particular, 
this opens the way to analyze for which semigroups in the right half-plane the result holds. Our next result provides a complete characterization in terms of its infinitesimal generator when the Denjoy-Wolff point of the semigroup is $\infty$. As customary, we denote
$
\C_{\varepsilon}:=\{s\in\C:\text{Re}(s)>\varepsilon\}
$  and $\C_{+}=\C_{0}$. The space $H^{\infty}(\C_{\epsilon})$ consists on the bounded analytic functions in $\C_{\epsilon}$. Namely, we obtain:
\begin{theorem}\label{convunifsem1}
     Let $H:\C_+\to\C_+$ be a holomorphic function and denote by $\{\Phi_t\}$ the continuous semigroup whose infinitesimal generator is $H$. Then, the following statements are equivalent:
    \begin{itemize}
        \item[(i)] $H\in H^{\infty}(\C_{\varepsilon})$, for some $\varepsilon>0$.
        \item[(ii)] $H\in H^{\infty}(\C_{\varepsilon})$, for all $\varepsilon>0$.
        \item[(iii)] There exist a constant $K>0$ and $\varepsilon_{0}>0$ so that
      \begin{equation*}    \sup_{z\in\C_{\varepsilon}}|H(z)|\leq\frac{K}{\varepsilon}, \quad \text{ for all }0<\varepsilon<\varepsilon_{0}.
\end{equation*}
\item[(iv)] There exist two constants $C>0$ and $t_{0}>0$ such that if $0\leq t<t_{0}$, we have
\[
|\Phi_t(z)-z|\leq C\sqrt{t},\quad \text{for all $z\in\C_+$}.
\]
\item[(v)] $\{\Phi_t\}$  converges to the identity map uniformly on $\C_+$, as $t$ goes to zero.
\item[(vi)] There exists $\varepsilon>0$ such that $\{\Phi_t\}$ converges to the identity map
uniformly on $\C_{\varepsilon}$, as $t$ goes to zero.

\end{itemize}
\end{theorem}

To the best of the authors' knowledge, the study of continuous semigroups of analytic functions susceptible of being developable with a Dirichlet series was initiated in \cite{noi} (see also \cite{noi2}). More specifically, the continuous semigroups of analytic functions in the \emph{Gordon-Hedenmalm} class $\mathcal G$ (see Definition \ref{Def-GHclass}) were studied there. This class, introduced in \cite{gorheda}, is the family of symbols giving rise to bounded composition operators in  $\mathcal{H}^2$, the Hilbert space of Dirichlet series that Hedenmalm, Lindqvist and Seip brought up in \cite{HedLinSeip}. 
In \cite{noi}, the authors found an interesting phenomena: every continuous semigroup of analytic functions in the class $\mathcal{G}$ happens to converge to the identity uniformly in half planes  $\C_{\epsilon}$,
%=\{s\in\C: \text{Re}(s)>\epsilon\}$, 
$\epsilon>0$. As a byproduct of above theorem we get:
%A natural question which arose is that of under what conditions these semigroups would converge uniformly to the identity in the whole right half-plane. In the present work, it is proven that, in fact, this surprising property is satisfied for a large class of continuous semigroups of analytic functions in the right half-plane, in particular, those in the Gordon-Hedenmalm class. 

\begin{corollary}\label{convunifsemg}
Let $\{\Phi_{t}\}$ be a continuous semigroup in the Gordon-Hedenmalm class,   then $\{\Phi_{t}\}$ converges to the identity map uniformly in the right half-plane.
\end{corollary}

The proof of Gumenyuk's result hinges upon  classical results of Geometric Function Theory  like  the No-Koebe-Arcs Theorem and the existence of the Koenigs map of a semigroup. In our approach,  the proofs of Theorems \ref{conunifelip} and \ref{convunifsem1}  depend strongly on good descriptions of the infinitesimal generators of continuous semigroups. In the case of the unit disc  and the right half-plane, we use the Berkson-Porta formula (see Theorem \ref{BPformula} in Section \ref{sec:semigroups}) and to prove Corollary \ref{convunifsemg} in the case of semigroups in the Gordon-Hedenmalm class  the fact that their infinitesimal generators are Dirichlet series with non-negative real part (see Theorem \ref{gcoro} in Section \ref{sectionclassg}). In the proofs, we also  strongly use  a result of  M. Lavrentiev \cite{Lav} relating  the behaviour of the length and the harmonic measure in the boundary of Jordan domains.

%The ideas involved in the proof of these results happen to be far-reaching. Indeed, they can be used to state more general results for semigroups in the right half-plane (see Theorems \ref{convunifsemg2} and \ref{convunifsem}).

The plan for the paper is the following. In the next section, we recall the notion of semigroup and describe some results about its infinitesimal generator that will be needed along the paper.  Section \ref{Sec:lavrentiev} is devoted to state the result of Lavrentiev which appears to be  crucial in our proofs. Moreover, we rewrite it in terms of harmonic measures which  is more suitable in our reasoning. Section \ref{sec:half-plane}
is mainly devoted to the proof of Theorem \ref{convunifsem1}, getting in Theorem \ref{convunifsem} a stronger version of 
``(iii) implies (iv)'' of Theorem \ref{convunifsem1}.
In Section  \ref{sectionclassg} we present definitions and  the necessary results about Dirichlet series we need to prove Corollary \ref{convunifsemg}. Finally, the last section is engaged to prove  Theorem \ref{conunifelip} and we provide an example showing that such a result is sharp.

\noindent {\bf Acknowledgements.} The authors thank Athanasios Kouroupis for some significant remarks on the previous version of Theorem \ref{convunifsem1} that have greatly improved its statement.

\section{Continuous semigroups}\label{sec:semigroups}

Given a simply connected domain $\Omega$ in the complex plane $\C$, a continuous one-parameter semigroup $\{\Phi_t\}_{t\geq 0}$ of holomorphic self-maps of~$\Omega$ --a continuous semigroup of $\Omega$ for short-- is a continuous homomorphism $t\mapsto \Phi_t$ from the
additive semigroup $(\R_{\ge0}, +)$ of non-negative real numbers to the
semigroup $({\sf Hol}(\Omega,\Omega),\circ)$ of  holomorphic self-maps
of $\Omega$ with respect to composition, endowed with the
topology of uniform convergence on compacta. That is,

 \begin{definition}
      Let $\Omega$ be a domain in $\C$ and let $\{\Phi_t\}_{t\geq0}$ be a family of holomorphic functions $\Phi_t:\Omega\to \Omega$. We say that $\{\Phi_t\}_{t\geq0}$  is a continuous semigroup if:
\begin{enumerate}
\item [(i)]$\Phi_0(s)=s,$ $s\in \Omega$,
 \item[(ii)] For every $t,u\geq0$, $\Phi_t\circ\Phi_u=\Phi_{t+u}$.
    \item[(iii)] $\Phi_t\to\Phi_0$ uniformly on compact subsets of $\Omega$ as $t\to 0^+$.
\end{enumerate}
   \end{definition}
   
   For the sake of simplicity, we simply write  $\{\Phi_t\}$ to denote $\{\Phi_t\}_{t\geq0}$.

When $\Omega$ is either the unit disc or the right half-plane, 
if $\{\Phi_t\}$ is not a group of elliptic rotations, namely, it does not contain automorphisms of $\Omega$ with a fixed point in $\Omega$, then there exists a unique point $\tau\in \overline \Omega$ (the clousure in the Riemann sphere) such that $\Phi_t$ converges uniformly on compacta, as $t$ goes to $+\infty$, to the constant map $z\mapsto \tau$. Such a point $\tau$ is called the {\sl Denjoy-Wolff point} of $\{\Phi_t\}$. 
The semigroup is called {\sl elliptic} if $\tau\in  \Omega$. If $\tau\notin \Omega$, then  the semigroup is called non-elliptic.

Berkson and Porta proved in \cite{porta} the existence of the following limit
\begin{equation}\label{stress}
H(s)=\lim_{t\to0^+}\frac{\Phi_t(s)-s}{t}, \qquad \textrm{ for all } s\in \Omega
 \end{equation}
and such limit is uniform on compact sets of $\Omega$. In particular, $H$ is holomorphic.
Moreover, $t\mapsto \Phi _{t}(s)$ is the solution of the Cauchy problem:
\begin{equation}\label{cauchy}
\frac{\partial \Phi _{t}(s)}{\partial t}=H(\Phi _{t}(s))\quad
\mbox{and} \quad \Phi _{0}(s)= s\in \Omega.
\end{equation}
The function $H$ is called the  {\sl infinitesimal generator} of
the semigroup $\left\{ \Phi _{t}\right\} .$ There are several nice descriptions of the holomorphic functions that are infinitesimal generators. Maybe the most celebrated and useful one is due to Berkson and Porta, who proved the following:
\begin{theorem}\cite{porta}\label{BPformula}
Let $H:\D\to \C$  be a holomorphic function with 
$H\not\equiv 0$. Then $H$ is the 
infinitesimal generator of a continuous semigroup $\{\Phi_t\}$ if and only if there exists a unique
$\tau\in\overline \D$ and a unique $p:\D\to \C$ holomorphic with $\Re\, 
p(z)\geq 0$ such that the following formula, known as the {\sl Berkson-Porta
formula}, holds
\[
H(z)=(z-\tau)(\overline{\tau}z-1)p(z).
\]
In such a case, the point $\tau$ coincides with the Denjoy-Wolff point of the semigroup $\{\Phi_t\}$ .
\end{theorem}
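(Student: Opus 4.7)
My plan is to prove the two implications separately, treating the elliptic ($\tau\in\D$) and non-elliptic ($\tau\in\partial\D$) cases in parallel.

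\textbf{Necessity.} Assume $H\not\equiv 0$ generates $\{\Phi_t\}$. I take $\tau$ to be the Denjoy--Wolff point of the semigroup (the common interior fixed point in the elliptic case, including the elliptic automorphism case; the boundary limit in the non-elliptic case); its uniqueness is built into the Denjoy--Wolff theorem. Setting
\[
p(z) := \frac{H(z)}{(z-\tau)(\overline{\tau}z - 1)}
\]
makes the factorisation tautological and $p$ unique once it is shown to be well defined; what remains is to verify that $p$ is holomorphic on $\D$ and satisfies $\Re p\ge 0$ there.

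\textbf{Holomorphy of $p$.} If $\tau\in\partial\D$, neither $z-\tau$ nor $\overline{\tau} z - 1$ vanishes on $\D$, so the quotient is automatically holomorphic. If $\tau\in\D$, differentiating the identity $\Phi_t(\tau)\equiv \tau$ at $t=0^+$ using \eqref{stress} yields $H(\tau)=0$, so $z-\tau$ divides $H$ in $\mathrm{Hol}(\D)$; since $\overline{\tau}z - 1$ still does not vanish on $\D$, $p$ is holomorphic.

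\textbf{Positivity of $\Re p$} (the main obstacle). In the elliptic case I conjugate $\{\Phi_t\}$ with the automorphism $\varphi_\tau(z)=(z-\tau)/(1-\overline{\tau}z)$ to reduce to $\tau=0$. Then Schwarz's lemma forces $|\Phi_t(z)|\le|z|$, so
\[
2\,\Re\bigl[\bar z\,H(z)\bigr] \;=\; \left.\frac{d}{dt}\right|_{t=0^+}|\Phi_t(z)|^2 \;\le\; 0,
\]
and with $\tau=0$ the Berkson--Porta formula reads $H(z)=-z\,p(z)$, whence $\Re[\bar z\,H(z)]=-|z|^2\,\Re p(z)$ and $\Re p\ge 0$. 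For $\tau\in\partial\D$ I apply Julia's lemma to each $\Phi_t$: since $\tau$ is the Denjoy--Wolff point, the horocycle inequality
\[
\frac{|\tau-\Phi_t(z)|^2}{1-|\Phi_t(z)|^2} \;\le\; \frac{|\tau-z|^2}{1-|z|^2}
\]
holds, and differentiating it at $t=0^+$ and simplifying reduces, after a short algebraic manipulation, to exactly $\Re\bigl[H(z)/((z-\tau)(\overline{\tau}z-1))\bigr]\ge 0$.

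\textbf{Sufficiency.} Conversely, given a factorisation of $H$ of the stated form, I solve the Cauchy problem $\dot x = H(x)$, $x(0)=z$, locally by Picard--Lindel\"of; the remaining — and most delicate — task is global forward existence inside $\D$. For this I estimate $\frac{d}{dt}|x(t)|^2 = 2\Re\bigl[\overline{x(t)}\,H(x(t))\bigr]$ using the product structure of $H$, observing that as $|x|\to 1$ the combination $\overline{x}(x-\tau)(\overline{\tau}x-1)$ tends to a non-positive real quantity, so that together with $\Re p\ge 0$ an exit-time/Gronwall-type argument prevents $|x(t)|$ from reaching $1$ in finite time. Standard ODE theory then yields holomorphic dependence on $z$ and the semigroup law, and $\tau$ is recovered as the Denjoy--Wolff point by inspecting the long-time behaviour of orbits (in the non-elliptic case $\Re p>0$ drives orbits toward $\tau$; in the elliptic case $\tau$ is the fixed point).
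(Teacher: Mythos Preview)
The paper does not contain a proof of this theorem. Theorem~\ref{BPformula} is stated as a known result with citation to Berkson and Porta \cite{porta}, and the authors refer the reader to \cite{manoloal} ``for all non-proven statements about continuous semigroups''. There is therefore no proof in the paper to compare your proposal against.

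As to your sketch on its own merits: the necessity direction is essentially the standard argument and is fine --- the reduction to $\tau=0$ via $\varphi_\tau$ followed by Schwarz's lemma, and the use of Julia's inequality in the boundary case, are exactly how this is usually done. The sufficiency direction, however, is where the real work lies, and your argument there is not yet a proof. The observation that $\overline{x}(x-\tau)(\overline{\tau}x-1)$ becomes a non-positive real number \emph{exactly} on $\partial\D$ does not by itself control $\Re[\overline{x}\,H(x)]$ near the boundary, because $p$ is unbounded in general (indeed $|p(z)|$ can grow like $(1-|z|)^{-1}$), so the small imaginary part of the prefactor multiplied by a large $\Im p$ could in principle dominate. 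A vague appeal to ``an exit-time/Gronwall-type argument'' does not close this gap. The clean way is to differentiate an appropriate Lyapunov quantity: for $\tau=0$ one has $H(z)=-zp(z)$ and $\tfrac{d}{dt}|x|^2=-2|x|^2\Re p(x)\le 0$ exactly, so $|x(t)|$ is non-increasing and the orbit is trapped; for $\tau\in\partial\D$ one differentiates the horocycle function $u(z)=|z-\tau|^2/(1-|z|^2)$ along the flow and checks, using the exact algebraic form of $H$, that $\tfrac{d}{dt}u(x(t))\le 0$, which keeps orbits inside each horodisc. You should also say a word about why the factorisation data $(\tau,p)$ are unique (it does not follow merely from uniqueness of the Denjoy--Wolff point; one must rule out a second decomposition with a different $\tau'$).
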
 

In fact, they also proved, see \cite[Theorem 2.6]{porta}, that $H$ is the infinitesimal generator of a continuous semigroup in the right half-plane with Denjoy-Wolff point $\infty$ if and only if $H(\C_{+})\subset\overline{ \C_{+}}$. 

We refer the reader to  \cite{manoloal} for all non-proven statements about continuous semigroups that we will use throughout the paper. 

\section{Preliminaries}\label{Sec:lavrentiev}

A key tool in our arguments are harmonic measures. We first give
the definition; see e.g. \cite[Chapter 4, Section 3]{Ran}.  Suppose that $\Omega$
is a domain in the complex plane with non-polar boundary and $E$ is a Borel
subset of $\partial_\infty \Omega$. The harmonic measure of $E$ relative to
$\Omega$ is the generalized Perron-Wiener solution $u$ of the
Dirichlet problem for the Laplacian in $\Omega$ with boundary values $1$ on $E$ and $0$ on $\partial_\infty \Omega\setminus E$. We will use the
standard notation
\begin{equation*}\label{hm1}
u(z)=\omega_{\Omega }(z,E),\quad z\in \Omega.
\end{equation*}
The boundary of a simply connected domain $\Omega$ contains a
continuum. Since every continuum is a non-polar set
\cite[Corollary 3.8.5]{Ran}, harmonic measures are defined for
$\Omega$. Throughout the paper the domains where harmonic measures are used will be simply connected. In this more straightforward case, a good reference for the properties we will use is \cite{Pommerenke} (see also \cite{manoloal}). Given $E\subseteq\C$, we denote by  $\ell(E)$ the outer linear measure of $E$ (see \cite[page 129]{Pommerenke}). If $E$ is a Jordan arc, then $\ell(E)$ is nothing but its length.

\begin{theorem}[Lavrentiev] \cite{Lav} (see also \cite[Proposition 6.11]{Pommerenke})\label{Lavrentiev}
Let $\Omega$ be a simply connected region in the complex plane such that $\Omega\supset\D$ and $\partial\Omega$ is Jordan curve such that $L=\ell(\partial\Omega)<\infty$. Consider $f:\D\to\Omega$ a conformal representation satisfying $f(0)=0$. Then, for every $\varepsilon>0$, there exists $\delta=\delta(\varepsilon,L)>0$ such that for every $A\subset\partial\Omega$, if $\ell(A)\leq\delta$, then $\ell(f^{-1}(A))<\varepsilon$.
\end{theorem}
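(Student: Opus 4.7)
My plan is to translate the geometric statement into an integral inequality on $\partial\D$ and close it with Jensen's inequality. First, since $\partial\Omega$ is a rectifiable Jordan curve, Carath\'eodory's theorem extends $f$ to a homeomorphism $\overline{\D}\to\overline{\Omega}$ and the classical F. and M. Riesz / Riesz--Fej\'er theorem gives $f'\in H^1(\D)$, absolute continuity of the boundary correspondence $e^{i\theta}\mapsto f(e^{i\theta})$, and $\int_0^{2\pi}|f'(e^{i\theta})|\,d\theta = L$. Consequently, if $A\subset\partial\Omega$ is Borel and $E=f^{-1}(A)\subset\partial\D$, one has the change-of-variables identity $\ell(A) = \int_E |f'(e^{i\theta})|\,d\theta$. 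This reduces the statement, via contraposition, to exhibiting a $\delta=\delta(\varepsilon,L)>0$ such that $|E|\ge\varepsilon$ implies $\int_E |f'|\,d\theta\ge\delta$.

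Next I would exploit the normalization. Because $\Omega\supset\D$ and $f(0)=0$, the map $g=f^{-1}|_\D$ is a holomorphic self-map of $\D$ fixing the origin; Schwarz's lemma yields $|g'(0)|\le 1$, hence $|f'(0)|\ge 1$. Since $f'$ is a non-vanishing $H^1$ function, its inner--outer factorization supplies Jensen's inequality
\[
0\le \log|f'(0)| \le \frac{1}{2\pi}\int_0^{2\pi}\log|f'(e^{i\theta})|\,d\theta.
\]
Splitting the integral across $E$ and $E^c=\partial\D\setminus E$ and using concavity of $\log$, I would obtain
\[
\int_{E^c}\log|f'|\,d\theta \le |E^c|\log\!\bigg(\frac{1}{|E^c|}\int_{E^c}|f'|\,d\theta\bigg) \le |E^c|\log\frac{L}{|E^c|},
\]
so $\int_E\log|f'|\,d\theta \ge -|E^c|\log(L/|E^c|)$; a second application of concavity then gives
\[
\int_E |f'|\,d\theta \ge |E|\exp\!\bigg(-\frac{|E^c|}{|E|}\log\frac{L}{|E^c|}\bigg).
\]
For $|E|\ge\varepsilon$ (hence $|E^c|\le 2\pi-\varepsilon$) the right-hand side is bounded below by a strictly positive quantity $\delta(\varepsilon,L)$ depending only on $\varepsilon$ and $L$, which completes the contrapositive.

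The hard part is really the first step, namely harnessing rectifiability of $\partial\Omega$ to conclude $f'\in H^1(\D)$ together with absolute continuity of the boundary map. This is the F. and M. Riesz theorem for Jordan domains with rectifiable boundary and rests on the full Hardy-space machinery; without it, one cannot even set up the integral inequality that drives the proof. Everything afterwards is a robust double use of Jensen's inequality that in fact produces an explicit formula for $\delta(\varepsilon,L)$; no delicate property of the particular $f$ is exploited beyond the Schwarz-lemma normalization at the origin.
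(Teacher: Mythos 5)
Your argument is correct. The paper does not prove this statement; it quotes it from Lavrentiev and from Pommerenke's book, so there is no internal proof to compare against. What you have written is essentially the classical argument behind Pommerenke's Proposition 6.11: rectifiability of $\partial\Omega$ yields $f'\in H^1(\D)$ together with absolute continuity of the boundary correspondence (F.\ and M.\ Riesz), the normalization $\D\subset\Omega$ with $f(0)=0$ combined with the Schwarz lemma gives $|f'(0)|\ge 1$, and the uniform dependence of $\delta$ on $(\varepsilon,L)$ alone drops out of two applications of Jensen's inequality.

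Two small remarks would make the write-up airtight. First, since $\ell$ denotes outer linear measure and the statement quantifies over arbitrary $A\subset\partial\Omega$, run the contrapositive through a Borel superset: if $\ell(f^{-1}(A))\ge\varepsilon$, choose Borel $B\supset A$; then $f^{-1}(B)$ is a Borel superset of $f^{-1}(A)$, so $|f^{-1}(B)|\ge\varepsilon$ and your estimate gives $\ell(B)=\int_{f^{-1}(B)}|f'|\,d\theta\ge\phi_{\min}(\varepsilon,L)$; taking the infimum over such $B$ yields $\ell(A)\ge\phi_{\min}$, which is all you need since $f|_{\partial\D}$ is a homeomorphism onto $\partial\Omega$. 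Second, to see that the right-hand side $\phi(a)=a\exp\bigl(-\tfrac{2\pi-a}{a}\log\tfrac{L}{2\pi-a}\bigr)$ is bounded away from $0$ for $a\in[\varepsilon,2\pi]$, note that $L\ge 2\pi$ (a Jordan curve enclosing $\D$ has length at least $2\pi$), so $\log\tfrac{L}{2\pi-a}\ge 0$ and the exponent is genuinely nonpositive, while as $a\to 2\pi^-$ the exponent tends to $0$ because $x\log(1/x)\to 0$; thus $\phi$ extends continuously to $[\varepsilon,2\pi]$ with $\phi(2\pi)=2\pi$ and attains a strictly positive minimum there, which is your explicit $\delta(\varepsilon,L)$.
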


In the setting of above theorem, Carath\'eodory Theorem guarantees that $f$ extends to a homeomorphism from $\overline \D$ to $\overline \Omega$ (see  \cite[Theorem 4.3.3]{manoloal} or \cite[Theorem 2.6]{Pommerenke}). 
For our purposes, it is more useful to rewrite this theorem in terms of harmonic measures. 
Notice that $\ell(f^{-1}(A))=\omega_{\D}(0,f^{-1}(A))$. Now, the conformal invariant character of the harmonic measure yields $\omega_{\D}(0,f^{-1}(A))=\omega_{\Omega}(0,A)$. That is, $\ell(f^{-1}(A))=\omega_{\Omega}(0,A)$. Taking this into account, we have the following immediate consequence.

\begin{corollary}\label{Lavrentievnoi} There exists a constant $\rho>0$ such that the following holds: Let $a\in (0,+\infty)$. Consider a simply connected domain $\Omega$ in $\C$ such that $\partial\Omega$ is a Jordan curve with $\ell(\partial\Omega)\leq 4a$ and $D(w,a/4)\subset \Omega$ for some $w\in \Omega$. Then,  $\omega_{\Omega}(w,A)<\frac18$, whenever $A$ is a measurable subset of $\partial \Omega$ with $\ell(A)\leq \rho a$. 
\end{corollary}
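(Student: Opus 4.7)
The plan is to reduce the statement to Theorem \ref{Lavrentiev} by an affine rescaling that normalizes the data. First, I would introduce the similarity $T(z)=\tfrac{4}{a}(z-w)$ and set $\widetilde\Omega=T(\Omega)$. By construction, $T$ sends $w$ to $0$ and $D(w,a/4)$ onto $\mathbb{D}$, so $\widetilde\Omega\supset\mathbb{D}$; moreover, $\partial\widetilde\Omega$ is a Jordan curve of length at most $(4/a)\cdot 4a=16$. Thus every admissible pair $(\Omega,w)$ is placed inside a uniform family of Jordan domains containing $\mathbb{D}$ and bounded by curves of perimeter at most $16$, which is the key feature that will let $\rho$ be chosen independently of $a$ and $\Omega$.

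Next, I would apply Lavrentiev's theorem to $\widetilde\Omega$ with parameter $L=16$ and with target error $\varepsilon=1/8$, obtaining a purely numerical constant $\delta_0=\delta(1/8,16)>0$. Defining $\rho=\delta_0/4$ will be the candidate for the universal constant. Given any measurable $A\subset\partial\Omega$ with $\ell(A)\le\rho a$, its image $T(A)\subset\partial\widetilde\Omega$ has length at most $4\rho=\delta_0$, so Theorem \ref{Lavrentiev} gives $\ell(f^{-1}(T(A)))<1/8$, where $f\colon\mathbb{D}\to\widetilde\Omega$ is the conformal representation with $f(0)=0$.

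Finally, I would convert this length bound into a harmonic-measure bound using the identity $\ell(f^{-1}(T(A)))=\omega_{\widetilde\Omega}(0,T(A))$ already recorded in the excerpt, combined with the conformal invariance of harmonic measure under the similarity $T$, which yields $\omega_\Omega(w,A)=\omega_{\widetilde\Omega}(0,T(A))<1/8$. The only point that needs confirming—and the one that is the mild obstacle—is that $\delta_0$ in Lavrentiev depends solely on $L$ and $\varepsilon$, not on the particular domain of perimeter at most $L$; since the statement of Theorem \ref{Lavrentiev} makes this dependence explicit, the argument is then a clean rescaling plus an appeal to conformal invariance.
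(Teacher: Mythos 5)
Your proof is correct and is essentially the same as the paper's: you rescale and translate by the similarity $T(z)=\tfrac{4}{a}(z-w)$ to reduce to a domain containing $\mathbb D$ with boundary length at most $16$, take $\rho=\delta(1/8,16)/4$, and invoke conformal invariance of harmonic measure.
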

\begin{proof} Take $\rho=\delta(1/8,16)/4$ the constant provided by Theorem \ref{Lavrentiev}. We may assume that $w=0$. Call  $\widehat \Omega =\frac{4}{a}\Omega$. Notice that $\D\subset \widehat\Omega$ and  $\ell(\partial\widehat \Omega)\leq 16$. Let $ A$ be a measurable subset of $\partial \Omega$ with $\ell(A)\leq \rho a$. Consider $\widehat A=\frac{4}{a}A\subset \partial \widehat \Omega$. Then $\ell(\widehat A)\leq 4\rho=\delta(1/8,16)$. Therefore, 
$$
\omega_{\Omega}(0,A)=\omega_{\widehat\Omega}(0,\widehat A)<1/8.
$$
\end{proof}

\section{Continuous semigroups in the right half-plane and the proof of Theorem~\ref{convunifsem1}.}\label{sec:half-plane}

%Let $\theta\geq0$. The space $H^{\infty}(\C_{\theta})$ consists on the bounded analytic functions in $\C_{\theta}$. We will write $H^{\infty}(\C_{+})$ instead of $H^{\infty}(\C_{0})$.

The main goal of this section is to prove Theorem~\ref{convunifsem1}. Previously, we present in Theorem \ref{convunifsem} an improvement of one of the implications in that result.

The growth rate of the modulus of the infinitesimal generator of a continuous semigroup close to the imaginary axis is intimately related to the rate of convergence of the $\text{Re}(\Phi_t(z))$ to $\text{Re}(z)$ as $t$ goes to zero and, as we are about to see in Theorem \ref{convunifsem}, it is also deeply linked to the ratio of convergence of $\{\Phi_t\}$ to the identity map. 

\ 
Before moving on, let us point out a fact which will be frequently used in this section and, in particular, in the forthcoming lemma.
    Let $H:\C_+\to\overline{\C_+}$ be the infinitesimal generator of a continuous semigroup $\{\Phi_t\}$ in $\C_+$. Then,  we have that
    \[
    \frac{d}{dt}\Re(\Phi_t(z))=\Re H(\Phi_t(z))\geq0,\quad  \text{for all $z\in\C_+$}.
    \]
    Therefore, the map $t\mapsto \Re(\Phi_t(z))$ is increasing.

\begin{lemma}\label{lemapre1}
Let $0\leq\alpha\leq1$. Let $H:\C_+\to \overline{\C_+}$ be the infinitesimal generator of a continuous semigroup $\{\Phi_{t}\}$ in $\C_+$ such that there exist a constant $K>0$ and $\varepsilon_{0}>0$ satisfying
\begin{equation*}
\sup_{z\in\C_{\varepsilon}}|H(z)|\leq\frac{K}{\varepsilon^{\alpha}}, \quad \textrm{ for all }0<\varepsilon<\varepsilon_{0}.
\end{equation*}
Then, there exists $B=B(K)>0$ and $\widetilde{t_{0}}=\widetilde{t_{0}}(\varepsilon_{0})>0$ such that if $t<\tilde{t_{0}}$, we have
\begin{equation*}
    |\emph{Re}(\Phi_t(z))-\emph{Re}(z)|\leq Bt^{
    \frac{1}{1+\alpha}}, \quad \textrm{ for all }  z\in\C_+.
\end{equation*}
\end{lemma}
\begin{proof} Fix $0<t<\varepsilon_{0}^{2}$. We can assume that $\varepsilon_0<1$. Take $\varepsilon=t^{
    \frac{1}{1+\alpha}}>0$ and choose $z\in\C_+$. If $\text{Re}(\Phi_t(z))<\varepsilon$, we are done, because $\text{Re}(\Phi_t(z))-\text{Re}(z)<\varepsilon=t^{
    \frac{1}{1+\alpha}}$. Assume then the existence of $0\leq t_1=t_1(z)<t$ such that $\text{Re}(\Phi_u(z))\geq \varepsilon$, for all $u\geq t_1$ and $\text{Re}(\Phi_u(z))\leq\varepsilon$ for every $u< t_1$. Such $t_1$ exists thanks to the fact that $u\mapsto \text{Re}(\Phi_u(z))$ is non-decreasing and continuous. Observe that $t_1$ could be $0$ and, in this case, $\Phi_{t_1}(z)-z=0$. Now, 
\begin{align*}
    \text{Re}(\Phi_t(z))-\text{Re}(z)&=\int_0^t\text{Re}(H(\Phi_u(z)))du\\
    &=\int_0^{t_1}\text{Re}(H(\Phi_u(z)))du+
    \int_{t_1}^t\text{Re}(H(\Phi_u(z)))du
    .
\end{align*}
Using the hypothesis on $H$, the definition of $t_1$ and, the choice of $\varepsilon$, we find that
\begin{align*}
    \int_0^{t_1}\text{Re}(H(\Phi_u(z)))du+
    \int_{t_1}^t\text{Re}(H(\Phi_u(z)))du
    &\leq \text{Re}(\Phi_{t_1}(z)-z)+\frac{K}{\varepsilon^{\alpha}}t\\ & \leq \varepsilon+\frac{K}{\varepsilon^{\alpha}}t=(1+K)t^{
    \frac{1}{1+\alpha}},
\end{align*}
and the conclusion follows. 
\end{proof}

\begin{lemma}\label{lemapre2}
Let $f:[c,d]\subset\R\to\R$ be a  $\mathcal C^{1}$ function of Lipschitz constant $K$. Consider $g:[c,d]\to \R$ given by $g(x)=\min\{ f(y): y\geq x\}$. Then,
\begin{itemize}
    \item[1)] The function $g$ is Lipschitz with constant $K$.
    \item[2)] If $g(x)<f(x)$, then $g'(x)=0$.
    \item[3)] If $g'(x)$ exists and $g'(x)>0$, then $g'(x)=f'(x)$.
\end{itemize}
\end{lemma}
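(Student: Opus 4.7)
The plan is first to observe two structural facts that will drive the rest of the argument: $g$ is non-decreasing, because the feasible set $\{y\geq x\}$ shrinks as $x$ grows; and $g(x)\leq f(x)$ for all $x$, because $y=x$ is always a competitor. By compactness and continuity of $f$, the minimum in the definition of $g(x)$ is attained at some $y(x)\in[x,d]$.

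For (1), given $x_1<x_2$, let $y_1\in[x_1,d]$ realize $g(x_1)=f(y_1)$. If $y_1\geq x_2$, then $y_1$ is still feasible for $g(x_2)$, so $g(x_2)\leq f(y_1)=g(x_1)$ and monotonicity forces equality. Otherwise $x_1\leq y_1<x_2$; using that $x_2$ itself is feasible for $g(x_2)$ and the Lipschitz bound on $f$,
$$
0\leq g(x_2)-g(x_1)\leq f(x_2)-f(y_1)\leq K(x_2-y_1)\leq K(x_2-x_1).
$$

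For (2), assume $g(x)<f(x)$, and pick $y_0>x$ with $g(x)=f(y_0)$ (we cannot have $y_0=x$ since $f(y_0)<f(x)$). The plan is to show that $g$ is constant on a full two-sided neighborhood of $x$; this at once yields $g'(x)=0$ whenever the derivative exists. On the right, for $x'\in[x,y_0]$ the point $y_0$ is still feasible, so $g(x')\leq f(y_0)=g(x)$, and monotonicity gives equality. On the left, the only new candidates competing against $y_0$ for $g(x')$ lie in $[x',x)$; since $f(x)>g(x)$ and $f$ is continuous, one can choose $\delta>0$ so that $f>g(x)$ throughout $[x-\delta,x]$. Thus no candidate in $[x',x)$ can beat $f(y_0)$ for $x'\in(x-\delta,x)$, so again $g(x')=g(x)$.

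For (3), assume $g'(x)>0$. The contrapositive of (2) forces $g(x)=f(x)$, for otherwise the local constancy just established would give $g'(x)=0$. Since $g\leq f$ everywhere with equality at $x$, the function $h:=f-g$ is non-negative on $[c,d]$ and vanishes at $x$, so $x$ is a local minimum of $h$. As both $f$ and $g$ are differentiable at $x$, $h'(x)=0$, i.e., $f'(x)=g'(x)$. The main obstacle is the left-sided local constancy in (2): the right-sided version is immediate from the definition of $g$, but on the left one must exploit continuity of $f$ together with the strict gap $g(x)<f(x)$ to rule out new minimizers appearing just below $x$; the rest is bookkeeping.
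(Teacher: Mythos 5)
Your argument for part 1) is essentially the paper's: same competitor $y_1\ge x_1$, same Lipschitz chain $0\le g(x_2)-g(x_1)\le f(x_2)-f(y_1)\le K(x_2-x_1)$ (you even sidestep a small imprecision, since the paper's claim that $y_1=y_2$ when $y_1\ge x_2$ holds only for a canonical choice of minimizer and is anyway unnecessary). The paper dismisses parts 2) and 3) as ``clear,'' so there your write-up simply supplies the missing details, and it does so correctly; the key observation that $g$ is constant on a full two-sided neighbourhood wherever $g<f$ (on the right via the minimizer $y_0>x$, on the left via continuity of $f$ together with the strict gap $f(x)>g(x)$) is exactly what one needs.

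The one point worth tightening is part 3) at the endpoints $c,d$. A one-sided local minimum of $h=f-g$ at an endpoint gives only $h'(c^+)\ge 0$ (resp.\ $h'(d^-)\le 0$), not $h'=0$, so ``local minimum implies $h'(x)=0$'' is justified only for interior $x$. The fix reuses the two-sided constancy from 2): if $g'(c)<f'(c)$, then $g<f$ on some $(c,c+\varepsilon)$, hence $g$ is locally constant there and, by continuity, constant on $[c,c+\varepsilon)$, forcing $g'(c)=0$ and contradicting $g'(c)>0$; the argument at $d$ is symmetric. In the paper's applications the lemma is used to control $\alpha_n'$ almost everywhere, so the endpoints are harmless there, but a self-contained proof of the lemma should address them.
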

\begin{proof}
First, notice that $g'(x)\geq0$, whenever such a value does exist, and $g(x)\leq f(x)$, for all $x$. 
For the first statement, consider $x_1,x_2\in [c,d]$ and, without loss of generality, we assume $x_1<x_2$. Take $y_{j}\geq x_{j}$ such that $f(y_{j})=g(x_{j})$, for $j=1,2$. On the one hand, if $y_{1}\geq x_{2}$, then $y_{1}=y_{2}$ and $g(x_{1})=g(x_{2})$. On the other hand,  if $y_{1}< x_{2}$, then
\begin{align*}
|g(x_{2})-g(x_{1})|=f(y_{2})-f(y_{1})&\leq f(x_{2})-f(y_{1})\leq K(x_{2}-y_{1})\leq K(x_{2}-x_{1}).
\end{align*}
So that $g$  is Lipschitz with constant $K$.

\ 
For part b), notice that if $g(x)<f(x)$, by continuity, there exists $\delta>0$ such that 
\[
\sup_{y\in(x-\delta,x+\delta)}g(y)<\inf_{y\in(x-\delta,x+\delta)}f(y).
\]
However, by the definition of $g$, this implies that $g$ is constant in such an interval and, therefore, $g'(x)=0$.
%then there exists $y>x$ such that $g(x)=g(y)$. Indeed, if this were not the case, then, we would have that $g(x)<g(y)$ for all $y>x$. This would imply that $g(x)=\min\{f(u):u\geq x\}=f(x)$, which contradict the hypothesis. We claim that $g$ is constant in the interval $[x,y]$. If this were not the case, since $g$ is increasing, there would exist a point $y>y_0>x$ such that $g(x)<g(y_0)$. But this would imply that $g(y)<g(y_0)$, which contradicts the fact that $g$ is increasing. Therefore, $g$ is constant in $[x,y]$ and this gives the result.

\ 
Consider $x$ a point such that $g'(x)$ exists and $g'(x)>0$. Then, by b), we have that $g(x)\geq f(x)$. However, by the definition of $g$, $g(x)\leq f(x)$. Therefore, $f(x)=g(x)$. Now, for some $\delta>0$ small enough, either $f(y)>g(y)$ for all $y\in I_{\delta}=[x,x+\delta)$, either there exists a sequence $\{y_n\}$ converging to $x$, $y_n>x$ for all $n$ and such that $f(y_n)=g(y_n)$ for all $n$. If the second case holds,  the conclusion follows since
\[
f'(x)=\lim_{y\to x^+}\frac{f(y)-f(x)}{y-x}=\lim_{n\to\infty}\frac{f(y_n)-f(x)}{y_n-x}=\lim_{n\to\infty}\frac{g(y_n)-g(x)}{y_n-x}=g'(x).
\]
Therefore, let us assume that the first case occurs. Then, by the argument to prove statement b),  we would have that $g$ is constant in $I_{\delta}$. Therefore, the right-hand derivative of $g$ at the point $x$ would be zero. Nonetheless, as $g'(x)$ exists, the left-hand derivative would also be zero, meaning that $g'(x)=0$, which contradicts the initial assumption on $x$.
\end{proof}
%$\rho(z,w)=\left|\frac{z-w}{\overline{z}+w}\right|.$
\begin{remark}
    According to \cite[Theorem 2.6]{porta}, the continuous semigroups $\{\Phi_t\}$ in the right half-plane with Denjoy-Wolff point $\infty$ are those whose infinitesimal generator $H$ is a holomorphic map in $\C_+$ and such that $H:\C_+\to\overline{\C_+}$. The case in which the infinitesimal generator $H$ touches the boundary of $\C_+$, this is, $H(\C_+)\cap \partial\C_+\not=\emptyset$, corresponds to the semigroup $\{\Phi_t\}$ given by $\Phi_t(z)=z+iat$, $a\in\R$, $t>0$. The statements of Theorem \ref{convunifsem} and Theorem \ref{convunifsem1} clearly hold for this case. Because of this, we shall exclude the case in which $H$ touches $\partial\C_+$ in the statments of the theorems.

\end{remark}

\begin{theorem}\label{convunifsem}
 Let $0\leq\alpha\leq1$. Let $H:\C_+\to\C_+$ be a holomorphic function and denote by $\{\Phi_t\}$ the associated continuous semigroup. Suppose that there exist a constant $K>0$ and $\varepsilon_{0}>0$ so that
      \begin{equation*}    \sup_{z\in\C_{\varepsilon}}|H(z)|\leq\frac{K}{\varepsilon^{\alpha}}, \quad \text{ for all }0<\varepsilon<\varepsilon_{0}.
\end{equation*}
     Then, there exist a constant $A=A(\varepsilon_0,\alpha,K)$ and a $t_0=t_0(\varepsilon_0,K)>0$ such that if $t<t_{0}$, we have
        \begin{equation*}
    |\Phi_t(z)-z|\leq At^{
    \frac{1}{1+\alpha}}, \quad \text{ for all } z\in\C_+.
     \end{equation*}

\end{theorem}

\begin{proof}
Step I: \emph{Simplifications}.
By Lemma \ref{lemapre1}, there exist $B=B(K)>0$  and $\widetilde{t_{0}}=\widetilde{t_{0}}(\varepsilon)>0$ such that if $t<\widetilde{t_{0}}$, then
\begin{equation}\label{estpare}
    |\Re(\Phi_t(z))-\Re(z)|\leq Bt^{
    \frac{1}{1+\alpha}}, \quad \textrm{ for all }  z\in\C_+.
\end{equation}
Therefore, it is enough to show that there exist a constant $\widetilde A$ and $t_{0}>0$ such that if $t<t_{0}$, then
\begin{equation*}
    |\Im(\Phi_t(z))-\Im (z)|\leq \widetilde A t^{
    \frac{1}{1+\alpha}}, \quad \textrm{ for all }  z\in\C_+.
\end{equation*}
Fix $z\in \C_{+}$ and $0<t<\widetilde{t_{0}}$. Let $\rho$ be the universal constant provided in Corollary \ref{Lavrentievnoi} and take $C_{1}>\max\{4, 2/\rho\}$. Write $a=|\text{Im}(\Phi_{t}(z))-\text{Im}(z)|$. Assume for the moment that $a\leq 2\varepsilon_0.$
We may suppose that
\begin{equation}\label{realmenorimaginaria}
\text{Re}(\Phi_t(z))
-\text{Re}(z) \leq\frac{1}{C_1}|\text{Im}(\Phi_t(z))-\text{Im}(z)|,
\end{equation}
otherwise taking $\tilde A>BC_1$ we are done.
 We shall carry out the proof for the case $\text{Im}(\Phi_{t}(z))-\text{Im}(z)=a$. The prove of the other case is done in a similar fashion. Define the curve
\begin{align*}
     \gamma:[0,&t]\to\C_+\\
      &u\mapsto \Phi_u(z).
        \end{align*}
We denote by $\gamma^*$ the image of the interval $[0,t]$ under $\gamma$. Similarly, we consider the square
\[
\mathcal{C}=\{s\in\C_+:\text{Re}(z)<\text{Re}(s)<\text{Re}(z)+a, \  \text{Im}(z)< \text{Im}(s)< \text{Im}(z)+a  \}
\]
and set $w$ to be the centre of the square $\mathcal{C}$. In fact, we can assume that the whole $\gamma^*$ lies in $\mathcal{C}$. \label{proofreduccion} Indeed,  define
\[
t_1=\sup\{u\in(0,t):\text{Im}(\Phi_u(z))<\text{Im}(z)\},
\]
\[
 t_2=\inf\{u\in(t_1,t):\text{Im}(\Phi_u(z))>\text{Im}(z)+a\}.
\]
Then, for all $\tau\in[t_1,t_2]$, $\text{Im}(z)\leq\text{Im}(\Phi_{\tau}(z))\leq\text{Im}(z)+a$. Let $z_0=\Phi_{t_1}(z)$ and $\upsilon=t_2-t_1$.
By the semigroup structure, $\Phi_{t_2}(z)=\Phi_{\upsilon}(z_0)$. Then,
\begin{align*}
a=\text{Im}(\Phi_{t_2}(z))-\text{Im}(\Phi_{t_1}(z))&=\text{Im}(\Phi_{t_2-t_1}(\Phi_{t_1}(z)))-\text{Im}(\Phi_{t_1}(z))\\&=\text{Im}(\Phi_{\upsilon}(z_0))-\text{Im}(z_0).
\end{align*}
On the other hand,
\[
a=\text{Im}(\Phi_t(z))-\text{Im}(z).
\]
Hence, if we prove that $\text{Im}(\Phi_{\upsilon}(z_0))-\text{Im}(z_0)\leq A\sqrt{\upsilon}$, the result will follow since
\[
\text{Im}(\Phi_t(z))-\text{Im}(z)=\text{Im}(\Phi_{\upsilon}(z_0))-\text{Im}(z_0)\leq A\sqrt{\upsilon}\leq A\sqrt{t}.
\]
Hence, we assume that $\gamma^*$ is in $\mathcal{C}$. In order to ease the reading of the proof, we define $\delta:=\text{Re}(\Phi_{t}(z))-\text{Re}(z)$. With this notation, \eqref{estpare} becomes $\delta\leq Bt^{
    \frac{1}{1+\alpha}}$. By \eqref{realmenorimaginaria} and the choice of $C_1$ we have that
    \begin{equation}\label{nimioperono}
        \delta\leq a/C_{1}<a/4.
    \end{equation}
    Also, clearly, $\text{Re}(w)>a/2$. Therefore, by hypothesis, 
\begin{equation}\label{boundepsilon2}
|H(w)|\leq \frac{2K}{a^{\alpha}}\cdot
  \end{equation}
  
\noindent Step II: \emph{Construction of $\Omega$}.  
%\emph{Step II}: construction of $\Omega$.
We are going to consider the curve $\alpha:[0,t]\to\C_+$ given by
$\alpha(u)=f(u)+ig(u)$, where $f(u)=\text{Re}(\gamma(u))$ and $g(u)=\min\{\text{Im}(\gamma(s)): s\geq u\}$ which satisfies the thesis from Lemma \ref{lemapre2}, so the imaginary part of the curve $\alpha$ is non-decreasing. We shall also denote by $\alpha^*$ to $\alpha([0,t])$. The curve $\alpha^*$ divides the square $\mathcal{C}$ into two regions. Let $\Omega$ be the right hand-side region of the square (see Figure \ref{figurin}). By the definition of $\delta$, since $\delta<a/4$, we have that $\text{Re}(w)>\delta+\frac{a}{4}$ and, consequently, $D(w,a/4)\subset \Omega$.

 \begin{figure}[h!]
\includegraphics[width=\textwidth]{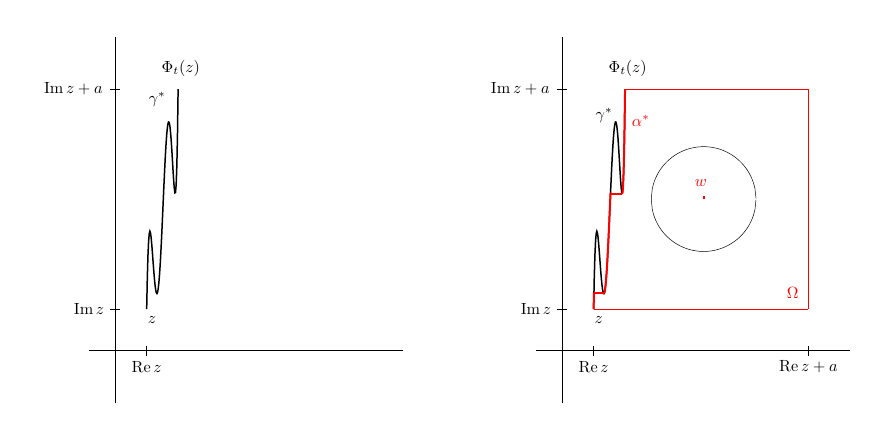}
\caption{The domain $\Omega$ and its boundary.}\label{figurin}
\end{figure}
\noindent

For every Borel set  $E\subset\partial\Omega$, we set $\omega(E)=\omega_{\Omega}(w,E)$,  where $\omega_{\Omega}(w,\cdot)$ is the harmonic measure of $\Omega$ at the point $w$. We claim that
\begin{equation}\label{estinf2}
    \omega(\alpha^*)\geq\frac14 \quad \textrm{and}  \quad\ell(\partial \Omega)\leq 4a.
\end{equation}
Indeed, for $E$ a Borel subset of $\partial\mathcal{C}$ we set $\omega_{\mathcal C}(E)=\omega_{\mathcal C}(w,E)$. Since this harmonic measure is a probability measure, $\omega_{\mathcal C}(w,\partial\mathcal{C})=1$. Furthermore, by the invariance of the harmonic measure under $\pi/2$-rotations, we have that $\omega_{\mathcal C}(w,L_{j})=1/4$, where $L_j$, stands for each of the sides of the square $\mathcal{C}$, $j=1,\ldots,4$.  Let $L_1$ be the left vertical side of the square $\mathcal{C}$. Let $\Gamma=L_{1}\cup [z+ia,z+ia+\delta]$. Notice that $\partial\mathcal{C}\setminus \Gamma=\partial\Omega\setminus\alpha^*$. Hence, by the Subordination Principle of harmonic measures, we have that
\[
\omega_{\mathcal{C}}(w,\partial\mathcal{C}\setminus \Gamma)
\geq \omega_{\Omega}(w,\partial\Omega\setminus\alpha^*).
\]
Then,
\begin{align*}
 \omega(\alpha^*) =\omega_{\Omega}(w,\alpha^*)   
  =
  1-\omega_{\Omega}(w,\Omega\setminus\alpha^*)
  &\geq 
  1-\omega_{\mathcal{C}}(w,\partial\mathcal{C}\setminus \Gamma)=  \omega_{\mathcal C}(w,\Gamma)\geq \omega_{\mathcal C}(w,L_{1})=\frac14\cdot
\end{align*}
We now show that $\ell(\partial \Omega)\leq 4a$. Since, $\Re \, \alpha $ and $\Im \, \alpha$ are non-decreasing functions, we have that
\begin{align*}\label{longitudfrontera}
   \ell(\alpha)=\int_0^{t}|\alpha'(u)|du&\leq 
    \int_{0}^{t}\text{Re}(\alpha'(u))du+\int_0^{t_n}\text{Im}(\alpha'(u))du\\ & =\text{Re}(\Phi_{t}(z)-z)+\text{Im}(\Phi_{t}(z)-z)\leq \delta+a=\ell (\Gamma).
\end{align*}
Thus, $\ell(\partial \Omega)\leq 4a$, as desired.

\noindent
Step III: \emph{Application of Lavrentiev's Theorem.}
We shall apply Corollary \ref{Lavrentievnoi} to the domain $\Omega$ and to a suitable subset $A$ of $\partial\Omega$. To this purpose, write $\eta=144$. If $\eta\geq\frac{\rho a^{1+\alpha}}{4Kt}$, then 
\begin{equation}\label{no2}
   a=\text{Im}(\Phi_{t}(z))-\text{Im}(z)\leq  \left(\frac{576K}{\rho}\right)^{\frac{1}{1+\alpha}}t^{\frac{1}{1+\alpha}}
\end{equation}
 and the conclusion would follow. Otherwise, if $\eta< \frac{\rho a^{1+\alpha}}{4Kt}$, we will reach a contradiction. Consider the sets
\[
\mathcal A_1=\{s\in\alpha^*:s\not\in\gamma^*\},\quad \mathcal A_2=\{s\in\alpha^*\cap\gamma^*:|H(s)|\leq \frac{2K}{a^{\alpha}}\eta\}.
\]
Define $\mathcal A=\mathcal A_1\cup \mathcal A_2$. By the definition of the sets $\mathcal A_1$, $\mathcal A_2$ and Lemma \ref{lemapre2} 
\begin{align*}
\ell(\mathcal A)=\ell(\mathcal A_{1})+\ell(\mathcal A_{2})&\leq \int_{\alpha^{-1}(\mathcal A_{1})}|\alpha'(u)|du+\int_{\alpha^{-1}(\mathcal A_{2})}|\alpha'(u)|du\\
&\leq
\int_{\alpha^{-1}(\mathcal A_{1})}\text{Re}(\alpha'(u))du+ \int_{\alpha^{-1}(\mathcal A_{2})}|\gamma'(u)|du \\
& \leq
\delta+ \frac{2K}{a^{\alpha}}\eta t<
\frac{a}{C_{1}}+\frac{\rho }{2} a<\rho a,
 \end{align*}
 where in the last two inequalities we have used \eqref{nimioperono} and the choices of both $\eta$ and $C_1$, respectively. Therefore, we are under the conditions of Corollary \ref{Lavrentievnoi}. Hence, the corollary gives that $\omega(\mathcal A)\leq 1/8$. This, together with \eqref{estinf2}, yields
\begin{equation}\label{Eq:otravez1/8}
    \omega(\alpha^*\setminus \mathcal A)\geq\frac18\cdot
\end{equation}
Summing up, we have shown that there exists a large set, $\alpha^*\setminus \mathcal A$, meaning that that $\omega(\alpha^*\setminus \mathcal A)\geq1/8$, where the infinitesimal generator is large, in the sense that $|H(w)|>\frac{2K}{a^{\alpha}}\eta$, for all $w$ in such set. This fact will allow us to reach the desired contradiction in the next step.

\noindent
Step IV: \emph{Reaching the contradiction.}
The function $\text{Re}(\sqrt{H})$ is positive and harmonic. Observe also that if $|\text{arg}(w)|<\pi/4$, we have that $\Re(w)\geq|w|/\sqrt{2}$. This is the reason why we consider $\sqrt{H}$ instead of $H$.  We recall that $w\in\C_{a/2}$. Hence, both the definition of harmonic measure and \eqref{boundepsilon2} respectively give
\begin{align*}
    \int_{\alpha^*\setminus \mathcal A}\text{Re}(\sqrt{H(s)})d\omega(s)\leq \int_{\partial\Omega}\text{Re}(\sqrt{H(s)})d\omega(s)
    &=
\text{Re}\left(\sqrt{H(w)}\right)\leq \sqrt2\sqrt{\frac{K}{a^{\alpha}}}\cdot
\end{align*}
On the other hand, by the previous observation, the definition of the set $\mathcal A$, estimate \eqref{Eq:otravez1/8} and using that $\eta=144$,
\begin{align*}
     \int_{\alpha^*\setminus \mathcal A}\text{Re}(\sqrt{H(s)})d\omega(s)  \geq\int_{\alpha^*\setminus\mathcal  A} \frac{\sqrt{|H(s)|}}{\sqrt{2}}d\omega(s)
    & \geq 
     \frac{1}{\sqrt{2}}\sqrt{\frac{2K\eta}{a^{\alpha}}}\omega(\alpha^*\setminus \mathcal A)\geq \frac32\sqrt{\frac{K}{a^{\alpha}}}\cdot
\end{align*}
Thus, we obtain the desired contradiction. Consequently,  $\eta\geq\frac{\rho a^{1+\alpha}}{4Kt}$, \eqref{no2} holds, and we are done in case $a\leq 2\varepsilon _0$ with $\widetilde  A:=\max\{BC_1,  \left(576K/\rho\right)^{\frac{1}{1+\alpha}}\}$ and $A(K):=\widetilde A+B$.

\
\noindent
Step V: \emph{Case $a>2\varepsilon_0$}
If $a>2\varepsilon_0$, then take $t'$ the smallest positive real number such that $$a'= \text{Im}(\Phi_{t'}(z))-\text{Im}(z)=2\varepsilon_0.$$ Then $t'<t<\tilde{t_0}$. We have already proved that $a'\leq \tilde A (t')^{\frac{1}{1+\alpha}}$. Therefore, $t\geq (2\varepsilon_0/\tilde A)^{1+\alpha}$. So we are done taking $t_0<\min\{\tilde{t_0},(2\varepsilon_0/\tilde A)^{1+\alpha}\}$.
 \end{proof}

\begin{remark}
In the above theorem, if $\alpha=0$, then $H\in H^{\infty}(\C_+)$. That is, the infinitesimal generator is bounded in the right half-plane $\C_+$. In such case, the semigroup converges to the identity like $O(t)$. In fact, for $\alpha=0$, Theorem \ref{convunifsem} is actually and equivalence. Indeed, if
  \begin{equation*}
    |\Phi_t(z)-z|\leq At, \quad \text{ for all } z\in\C_+,
  \end{equation*}
  then, this together with \eqref{stress} give
  \[
  |H(z)|=\lim_{t\to0^+}\frac{|\Phi_t(z)-z|}{t}\leq A,\quad\text{for all $z\in\C_+$.}
  \]
  As we will see in Theorem \ref{convunifsem1}, we also have an equivalence in Theorem \ref{convunifsem} for $\alpha=1$.
\end{remark}

\begin{remark}
 In connection with the previous remark, $O(t)$ is actually the best rate of convergence one could expect. Indeed, every better convergence rate to the identity of the semigroup $\{\Phi_t\}$, say $O(t^{\alpha})$, $\alpha>1$, would give the trivial semigroup. This follows immediately from \eqref{stress} 
 \[
|H(z)|\leq\lim_{t\to0^+}\frac{|\Phi_t(z)-z|}{t}\leq C\lim_{t\to0^+}t^{\alpha-1}=0,\quad z\in\C_+.
 \]
Then, $H(z)=0$ for every $z\in\C_+$ and $\{\Phi_t\}$ is the trivial semigroup. 
\end{remark}
Now we turn to the central theorem of this section. Before proving it, we establish the following auxiliary lemma. Given a simply connected domain $\Omega$, we denote by $\rho_{\Omega}$ the pseudohyperbolic distance in $\Omega$.

\begin{lemma}\label{lemmap}
    Let $z=x+iy,w=u+iv\in\C_+$ be such that $\rho_{\C_+}(z,w)<r<1$. Then,
    \[
    a) \ u\geq x\frac{1-r}{1+r},\quad b) \  |y-v|\leq \frac{2r x}{1-r^2}.
    \]
\end{lemma}
\begin{proof}
Let $\lambda>0$. The map $T_{\lambda}:\D\to\C_+$, $T_{\lambda}(z)=\lambda\frac{1+z}{1-z}$ maps $D(0,r)$, the euclidean disc of centre $0$ and radius $r$, to a euclidean disc of centre $c=\lambda(1+r^2)/(1-r^2)$ and radius $R=\lambda 2r/(1-r^2)$.
By the conformal invariance of the pseudohyperbolic distance, we have that $\rho_{\C_+}(T_{\lambda}(a),T_{\lambda}(b))=\rho_{\D}(a,b)$, for all $a,b\in\D$. 

\ 
Suppose, $z=\lambda>0$. Then, $\{\zeta\in\C_+: \rho_{\C_+}(\lambda,\zeta)<r\}$ is the image of the disc $D(0,r)$ under the map $T_{\lambda}$. Since $\rho_{\C_+}(z,w)<r$, $w$ belongs to the euclidean disc of centre $\lambda\frac{1+r^2}{1-r^2}$ and radius $\lambda \frac{2r}{1-r^2}$, so we have that
\[
u\geq \lambda \frac{1+r^2}{1-r^2}-\lambda\frac{2r}{1-r^2}=\lambda\frac{1-r}{1+r}\cdot
\]
Regarding part $b)$, since $z=\lambda $, $y=0$ and $x=\lambda$. Therefore, as the centre of the disc $T(D(0,r))$ lies on $\R$, then the imaginary part of the elements in the disc is between $-R$ and $R$. This is,
\[
|v|\leq R=\lambda\frac{2r}{1-r^2},
\]
and the lemma follows for $z$ real.

If $z\in\C$, we use the conformal invariance of $\rho_{\C_+}$ under the vertical translation $w\mapsto w-iy$.
\end{proof}
\begin{lemma}\label{lemma2}
For every $\varepsilon>0$, we have that $D(z,2\varepsilon/5)\subset D_{\rho}(z,1/4)$ for all $z\in\C_{\varepsilon}$, where $D(x,r)$ denotes the euclidean disc of centre $x$ and radius $r$ and $D_{\rho}(x,r)$ stands for the corresponding pseudo-hyperbolic disc. 
\end{lemma}
\begin{proof} 
Set $\varepsilon_0=\text{Re}(z)>\varepsilon$. Let $c=\varepsilon_0/\varepsilon>1$. Consider the disc $D_{\rho}(\varepsilon,1/4)$, which is an euclidean disc containing the point $\varepsilon$. Take
\[
\delta=
\min\{
\varepsilon-\Re(w):w\in\partial D_{\rho}(\varepsilon,1/4)\}
=
\varepsilon\left(
1-\frac{1-1/4}{1+1/4}
\right)=\frac25\varepsilon.
\]
Then, $D(\varepsilon,\delta/c)\subset D(\varepsilon,\delta)\subset D_{\rho}(\varepsilon,1/4)$. Now, taking a $c$-homothety, by the conformal invariance of the pseudohyperbolic-metric, we have that $D(\varepsilon_0,2\varepsilon/5)\subset D_{\rho}(\varepsilon_0,1/4) $. Considering the necessary translation, we obtain that $D(z,2\varepsilon/5)\subset D_{\rho}(z,1/4)$, and the lemma follows.
\end{proof}
%\begin{theorem}\label{convunifsem1}
%     Let $H:\C_+\to\C_+$ be a holomorphic function and denote by $\{\Phi_t\}$ the continuous semigroup whose infinitesimal generator is $H$. Then, the following statements are equivalent:
%    \begin{itemize}
%        \item[(i)] $H\in H^{\infty}(\C_{\varepsilon})$, for some $\varepsilon>0$.
%        \item[(ii)] $H\in H^{\infty}(\C_{\varepsilon})$, for all $\varepsilon>0$.
%        \item[(iii)] There exist a constant $K>0$ and $\varepsilon_{0}>0$ so that
%      \begin{equation*}    \sup_{z\in\C_{\varepsilon}}|H(z)|\leq\frac{K}{\varepsilon}, \quad \text{ for all }0<\varepsilon<\varepsilon_{0}.
%\end{equation*}
%\item[(iv)] There exist two constants $C>0$ and $t>0$ such that if $t<t_{0}$, we have
%\[
%|\Phi_t(z)-z|\leq C\sqrt{t},\quad \text{for all $z\in\C_+$}.
%\]
%\item[(v)] $\{\Phi_t\}$  converges to the identity map uniformly on $\C_+$, as $t$ goes to zero.
%\item[(vi)] There exists $\varepsilon>0$ such that $\{\Phi_t\}$ converges to the identity map
%uniformly on $\C_{\varepsilon}$, as $t$ goes to zero.
%
%\end{itemize}
%\end{theorem}
\begin{proof}[Proof of Theorem \ref{convunifsem1}]
The implications (iii) implies (ii), (ii) implies (i), (iv) implies (v), and (v) implies (vi) are all trivial. Now, (iii) implies (iv) follows from Theorem \ref{convunifsem} with $\alpha=1$. Therefore, it suffices to prove (vi) implies (i) and (i) implies (iii). We begin by showing the latter one.

\ 
By hypothesis, the holomorphic function $H$ is bounded in some half-plane. Hence, there exists $\varepsilon_0>0$ so that $|H(z)|\leq M$ for every $z\in\overline{\C_{\varepsilon_0}}$. Fix $0<\varepsilon<\varepsilon_0$. Let $z\in\C_{\varepsilon}$ be such that $\Re(z)<\varepsilon_0$. Consider the point $z_0=\varepsilon_0+i\Im(z)$. Since $H:\C_+\to\C_+$ is holomorphic, by Schwarz-Pick Lemma, we have that
 \begin{align*}
    \left|
     \frac{H(z)-H(z_0)}{\overline{H(z)}+H(z_0)}
    \right|=\rho_{\C_+}(H(z),H(z_0))
    \leq\rho_{\C_+}(z,z_0)=
\left|\frac{z-z_0}{\overline{z}+z_0}\right|
=
\frac{\varepsilon_0-\Re(z)}{\varepsilon_0+\Re(z)} ,
\end{align*}
Set
\[
A=\frac{\varepsilon_0-\Re(z)}{\varepsilon_0+\Re(z)}.
\]
Then, applying the triangular inequality in the latter inequality we obtain
\begin{align*}
   |H(z)|-|H(z_0)|\leq  |H(z)-H(z_0)|\leq A|\overline{H(z)}+H(z_0)|&\leq A(|H(z)|+|H(z_0)|).
\end{align*}
This is,
\[
|H(z)|\leq\frac{1+A}{1-A}|H(z_0)|=\frac{\varepsilon_0}{\Re(z)}|H(z_0)|\leq \frac{\varepsilon_0}{\varepsilon}M.
\]
Finally, if $z\in\overline{\C_{\varepsilon_0}}$, we have that
\[
|H(z)|\leq M\leq\frac{\varepsilon_0}{\varepsilon}M
\]
and the claim follows.

\ 
We conclude showing (vi) implies (i). Let us assume that $H$ is not bounded in $\C_{\varepsilon}$. Then, given $n\in\N$, there exists $z_n\in\C_{\varepsilon}$ such that $|H(z_n)|>2n$. 
 Now, by the choice of $z_n$, one of the following cases happens:
\begin{itemize}
    \item[i)] $\text{Re}(H(z_n))\geq n$,
    \item[ii)]   $\text{Re}(H(z_n))<n$ and $\text{Im}(H(z_n))>n$, or
    \item[iii)] $\text{Re}(H(z_n))<n$ and $\text{Im}(H(z_n))<-n$.
\end{itemize}
To simplify, write $\delta=2\varepsilon/5$. Since holomorphic functions are Lipschitz continuous in the pseudo-hyperbolic metric, by Lemma \ref{lemma2}, we have that
\[
\rho(H(z_n),H(w))\leq \rho(z_n,w)<\frac14,\quad \text{for all $w\in D(z_n,\delta)$}.
\]
 \noindent \textbf{Claim.}  Either one of the following statements holds
 \begin{itemize}
     \item[a)]   $\text{Re}(H(w))>n/3$, for all $w\in D(z_n,\delta)$,
     \item[b)] $\text{Im}(H(w))>n/3$, for all $w\in D(z_n,\delta)$, or
     \item[c)] $\text{Im}(H(w))<-n/3$, for all $w\in D(z_n,\delta)$.
 \end{itemize}
  Let $t_n=\inf\{t:\Phi_t(z_n)\not\in 
 D(z_n,\delta)\}$. Using the claim, we have that  for each $0\leq t\leq t_{n}$ either $\text{Re}(H(\Phi_t(z_n)))\geq n/3$ for all $t<t_{n}$, $\text{Im}(H(\Phi_t(z_n)))\geq n/3$, for all $t<t_{n}$, or $-\text{Im}(H(\Phi_t(z_n)))\leq- n/3$, , for all $t<t_{n}$. We begin by assuming that the first holds:
   \begin{align*}
    \delta=|\Phi_{t_n}(z_n)-z_n|\geq  \Re (\Phi_{t_n}(z_n)-z_n)=\int_{0}^{t_{n}} \Re (H(\Phi_{\tau}(z_n)))\, d\tau \geq \frac13 n t_n.
\end{align*}
In the other two remaining cases the situation is similar. Hence, we can conclude that $\delta=|\Phi_{t_n}(z_n)-z_n|\geq \frac13 n t_n.$

Therefore, for each natural number $n$, there is $z_{n}\in \C_{\varepsilon}$ and $0<t_{n}\leq 3\delta/n$, such that $\delta=|\Phi_{t_n}(z_{n})-z_{n}|$. This implies that $\{\Phi_{t}\}$ cannot tend uniformly to the identity in $\C_{\varepsilon}$ as $t$ goes to zero.

  \noindent \textbf{Proof of the Claim.}  We begin by assuming case $i)$. By Lemma \ref{lemmap} a) with $r=1/4$, we have that
\[
\text{Re}(H(w))\geq \frac35\text{Re}(H(z_n))\geq \frac35 n.
\]
This gives a). Similarly, if we are under case ii) (the proof also holds for case iii)), using Lemma \ref{lemmap} b) with $r=1/4$, we find that
\[
|\text{Im}(H(w))-\text{Im}(H(z_n))|\leq \frac{2r\text{Re}(H(z_n))}{1-r^2}\leq \frac{2r}{1-r^2}n=\frac{8n}{15}.
\]
Since $\text{Im}(H(z_n))>n$, we conclude that $\text{Im}(H(w))>7n/15$ and this gives b). Arguing in a similar fashion for c) gives the desired conclusion and concludes the proof. 
\end{proof}
%I would like to thank Athanasios Kouroupis for pointing out to my supervisors and myself the equivalence between i) and iii) in Theorem \ref{convunifsem1}.

\

There are many interesting continuous semigroups in the right half-plane satisfying condition (ii) from Theorem \ref{convunifsem1}. More specifically, as we will see in Section \ref{sectionclassg}, there are certain continuous semigroups in $\C_+$, intimately linked to the semigroups of bounded composition operators on some Banach spaces of Dirichlet series, for which the statements of Theorem \ref{convunifsem1} hold.

%\begin{example}\label{Ex:noconvergenciauniforme}
%The thesis of the above theorem does not hold for all semigroups in the right half-plane.
% Indeed, consider the semigroup
% \begin{equation*}\label{sema12}
%\Phi_t(z)=\left(\frac t2+z^{1/2}\right)^2, \quad z\in\C_+.
%\end{equation*}
%The infinitesimal generator of this semigroup is $H(z)=\sqrt{z}$, $z\in \C_{+}$.
%Clearly,
% $
%    \Phi_t(z)-z=\frac{t^2}{4}+tz^{1/2}\to0
%$, as $t\to0$, uniformly on compacta of $\C_+$. 
%However, $\Phi_t$ does not converge to the identity uniformly in half-planes $\C_{\varepsilon}$, for any $\varepsilon\geq 0$.
%\end{example}

\begin{example}\label{falsosem}
There exist continuous semigroups in $\C_+$ failing to converge uniformly to the identity in the whole right half-plane $\C_+$. Fix $\alpha\in[0,1)$. Consider the family of holomorphic functions in $\C_+$ given by
\[
H_{\alpha}(z)=\frac{z^{\alpha}}{1-\alpha}, \quad z\in\C_+,
\]
for defining $z^{\alpha}$ we take the principal logarithm. Clearly, $H_{\alpha}(\C_+)\subset\C_+$. Hence, $H_{\alpha}$ is the infinitesimal generator of a continuous semigroup $\{\Phi_{t}\}$ in $\C_+$. We can compute these semigroups by solving the Initial Value Problem
\begin{equation}
\dot{w}(t)=\frac{w^{\alpha}(t)}{1-\alpha}\quad
\mbox{and} \quad w(0)= z\in\C_+,
\end{equation}
where $w(t)=\Phi_t(z)$, for $z\in\C_+$ fixed. Therefore,
\[
\Phi_t(z)=(t+z^{1-\alpha})^{\frac{1}{1-\alpha}}.
\]
We can also compute the Koenigs map $h_{\alpha}$ of the semigroups $\{\Phi_{t}\}$ using that 
\[
h'_{\alpha}(z)=\frac{1}{H_{\alpha}(z)}=z^{-\alpha}(1-\alpha),\quad z\in\C_+.
\]
Therefore, $h_{\alpha}(z)=z^{1-\alpha}$ and it is straightforward to check that
\[
\Phi_{t}(z)=h^{-1}_{\alpha}(h_{\alpha}(z)+t)).
\]
Eventually, $\Phi_t(z)\to z$ as $t$ goes to zero locally uniformly. However, the infinitesimal generator $H_{\alpha}$ is not bounded in any half-plane $\C_{\varepsilon}$. Therefore, thanks to Theorem \ref{convunifsem1} the semigroup $\{\Phi_{t}\}$ fails to converge to the identity uniformly in $\C_{\varepsilon}$, $\varepsilon>0$.
\end{example}

\section{Continuous semigroups in the Gordon-Hedenmalm class}\label{sectionclassg}
For unexplained results and terminology of Dirichlet series, we refer the readers to the monographs \cite{peris,queffelecs}.
As customary, we denote by $\mathcal D$
 the space of Dirichlet series that converges somewhere, namely the series
$$
\varphi(s)=\sum_{n=1}^{\infty}a_nn^{-s},
$$
which are convergent  in some half-plane $\C_{\theta}$. 

Gordon and Hedenmalm gave the characterisation of the boundedness of composition  operators in the context of the Hardy space of Dirichlet series $\mathcal{H}^2$, see \cite{gorheda}. To do so, they introduced the nowadays known as \emph{Gordon-Hedenmalm class} $\mathcal{G}$. 

\begin{definition} \label{Def-GHclass} Let $\Phi:\C_{+}\to\C_{+}$ be an analytic function.
\begin{enumerate}
\item We say that $\Phi$ belongs to the class $\mathcal{G}_{\infty}$ if there exist $c_\Phi\in\N\cup\{0\}$ and $\varphi\in\mathcal{D}$ such that
    \begin{equation}\label{chosenone}
        \Phi(z)=c_{\Phi}z+\varphi(z).
    \end{equation}
The value $c_{\Phi}$ is known as the {\sl characteristic} of the function $\Phi$. 
\item We say that $\Phi$ belongs to the {\sl Gordon-Hedenmalm class} $\mathcal{G}$ if $\Phi\in  \mathcal{G}_{\infty}$ and  $\Phi(\C_+)\subset \C_{1/2}$ in case $c_{\Phi}=0$.
\end{enumerate}
\end{definition}

Gordon and Hedenmalm proved the following characterisation.

\begin{theorem}[Gordon-Hedenmalm]\label{gordon}
An analytic function $\Phi:\C_{1/2}\to\C_{1/2}$ defines a bounded composition operator $\mathcal{C}_{\Phi}:\mathcal{H}^2\to\mathcal{H}^2$ if and only if $\Phi$ has a holomorphic extension to $\C_{+}$ that  belongs to the class $\mathcal{G}$. 
\end{theorem}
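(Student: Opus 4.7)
The plan is to test the operator $\mathcal{C}_\Phi$ against the orthonormal basis $\{n^{-s}\}_{n\geq 1}$ of $\mathcal{H}^2$ and to exploit the rigidity of Dirichlet series to deduce the special structure of $\Phi$; in the converse direction I would use the Bohr lift to reduce boundedness to a multiplier-type estimate.

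\textbf{Necessity.} Assume $\mathcal{C}_\Phi$ is bounded on $\mathcal{H}^2$. For every prime $p$, the image $p^{-\Phi(s)}=\mathcal{C}_\Phi(p^{-s})$ lies in $\mathcal{H}^2$, so it coincides on $\C_{1/2}$ with a convergent Dirichlet series $\sum_{k\geq 1} b_k^{(p)} k^{-s}$. Setting $M_p:=\min\{k:b_k^{(p)}\neq 0\}$, the asymptotic $p^{-\Phi(s)}\sim b_{M_p}^{(p)} M_p^{-s}$ as $\Re s\to +\infty$ would give $\Phi(s)/s\to \log M_p/\log p$. Since this limit must be the same constant $c_\Phi$ for every prime $p$, one obtains $M_p=p^{c_\Phi}$, and because $M_p\in \N$ the characteristic $c_\Phi$ must lie in $\N\cup\{0\}$. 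A parallel logarithmic extraction then identifies $\varphi(s):=\Phi(s)-c_\Phi s$ with a Dirichlet series in $\mathcal{D}$, whence $\Phi\in\mathcal{G}_\infty$. To promote $\Phi$ from $\C_{1/2}$ to a holomorphic map on $\C_+$, I would use that each $p^{-\Phi(s)}$ is uniformly bounded on every half-plane $\C_\theta$ with $\theta>0$, so that a Bohr-type theorem forces $\varphi$ itself to extend analytically across $\C_+$. When $c_\Phi=0$, the extended $\Phi$ automatically satisfies $\Phi(\C_+)\subset \C_{1/2}$.

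\textbf{Sufficiency.} Conversely, given $\Phi\in\mathcal{G}$, I would split into two cases. If $c_\Phi\geq 1$, the substitution $s\mapsto c_\Phi s$ sends $n^{-s}$ to $(n^{c_\Phi})^{-s}$, an isometry onto a closed subspace of $\mathcal{H}^2$, and the perturbation by $\varphi$ contributes a bounded multiplier on the Bohr lift of $\mathcal{H}^2$ to the infinite polytorus. If $c_\Phi=0$, the hypothesis $\Phi(\C_+)\subset\C_{1/2}$ ensures that $\mathcal{C}_\Phi(f)$ is a well-defined Dirichlet series for each $f\in\mathcal{H}^2$, and the boundedness estimate would follow from a Carleson-type embedding exploiting that $\mathcal{H}^2$ functions are uniformly controlled on every shift of the critical line.

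\textbf{Main obstacle.} The hard part will be the necessity: extracting from the sole boundedness of $\mathcal{C}_\Phi$ both the integrality of $c_\Phi$ and the Dirichlet series form of $\varphi$, together with the holomorphic continuation of $\Phi$ from $\C_{1/2}$ to all of $\C_+$. The cross-prime consistency of the asymptotics $\Phi(s)/s\to c_\Phi$ is the pivotal step that pins down $c_\Phi$ as a non-negative integer and glues the local data into a single Gordon-Hedenmalm symbol.
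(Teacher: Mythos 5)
The paper does not prove Theorem~\ref{gordon}; it only states it, attributing it to Gordon and Hedenmalm \cite{gorheda}. So there is no internal proof for your sketch to be compared against. Your outline does follow the broad strategy of the original Gordon--Hedenmalm argument --- test $\mathcal{C}_{\Phi}$ against the generators $p^{-s}$ of $\mathcal{H}^2$, read off $c_{\Phi}$ from the asymptotics as $\Re s\to+\infty$, and use the Bohr correspondence in the sufficiency direction --- but as written it names the pivotal difficulties rather than resolving them.

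Concretely: the asymptotic $p^{-\Phi(s)}\sim b^{(p)}_{M_p}M_p^{-s}$ yields, after taking moduli, $\Re\Phi(\sigma+it)/\sigma\to\log M_p/\log p$ uniformly in $t$; but the full claim $\Phi(s)/s\to c_{\Phi}$, including the imaginary part, requires a separate almost-periodicity argument since one cannot globally take complex logarithms, and this is precisely what is needed to show $\varphi:=\Phi-c_{\Phi}s$ is itself a genuine Dirichlet series. The assertion that ``a Bohr-type theorem forces $\varphi$ to extend analytically across $\C_+$'' inverts the logical order: Bohr's theorem identifies the abscissa of uniform convergence with that of boundedness, but one must first show $\varphi$ is bounded on every $\C_{\varepsilon}$, $\varepsilon>0$, which does not come for free from the boundedness of $\mathcal{C}_{\Phi}$ and is a real step in \cite{gorheda}. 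The claim that when $c_{\Phi}=0$ the extended $\Phi$ ``automatically'' satisfies $\Phi(\C_{+})\subset\C_{1/2}$ is simply false as stated --- there are Dirichlet series mapping $\C_{1/2}$ into $\C_{1/2}$ whose extension dips below the line $\Re s=1/2$ on $\C_{+}$ --- and ruling this out under the boundedness hypothesis requires a vertical-translation (Kronecker-type) argument. On the sufficiency side, the $c_{\Phi}\ge1$ case does not reduce to ``a bounded multiplier on the Bohr lift'': the correction $\varphi$ enters through $n^{-\varphi(s)}$, not as a multiplicative factor of the lifted function, and establishing $\|f\circ\Phi\|_{\mathcal H^2}\lesssim\|f\|_{\mathcal H^2}$ via the Bohr lift is the genuine content of Gordon--Hedenmalm's contractivity estimate. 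In short, each of the four crux points (integrality of $c_{\Phi}$, holomorphic continuation to $\C_{+}$, the $\C_{1/2}$ mapping condition for $c_{\Phi}=0$, and the $c_{\Phi}\ge1$ sufficiency) is acknowledged but not argued, so the proposal is an accurate road map rather than a proof.
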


 It is worth recalling that in \cite[Proposition 3.2]{noi}, it was shown that for any continuous semigroup $\{\Phi_t\}$ of analytic functions in the class $\mathcal{G}_{\infty}$, $c_{\Phi_{t}}=1$ for all $t$ and that its Denjoy-Wolff point is $\infty$.
 In \cite[Theorem 1.2]{noi}, it was characterized when a semigroup of composition operators is strongly continuous in the Hilbert space $\mathcal{H}^2$ as follows:
 
 \begin{theorem}\label{semigriuposGH}
Let $\{\Phi_t\}$ be a semigroup of analytic functions, such that $\Phi_t\in\mathcal{G}$ for every $t>0$ and denote by $T_{t}$ the composition operator $T_t(f)=f\circ\Phi_t$. Then, the following assertions are equivalent:
\begin{enumerate}[a)]
    \item $\{T_t\}_{t\geq0}$ is a strongly continuous semigroup in $\mathcal{H}^2$.
    \item $\{\Phi_t\}_{t\geq0}$ is a continuous semigroup.
    \item $\Phi_t(z)\to z$, as $t$ goes to $0$, uniformly in $z\in \C_{\varepsilon}$, for every $\varepsilon>0$.
\end{enumerate}
\end{theorem}

Bringing together the above Theorem \ref{semigriuposGH} and Theorem \ref{convunifsem1} we achieve Corollary \ref{convunifsemg}. It is worth also recalling that  the infinitesimal generator of a continuous semigroup in $\mathcal G$ is a holomorphic function that sends the right half-plane into its closure.   In fact, in \cite[Theorem 5.1]{noi}, it is given the following description of infinitesimal generators of the continuous semigroups in $\mathcal{G}$.

%Indeed, given a continuous semigroup $\{\Phi_{t}\}$ in the class $\mathcal G$, its infinitesimal generators $H$ sends $\C_{+}$ into its closure. If it is constant, the result is clear. Otherwise $H:\C_{+}\to \C_{+}$ and Theorem  \ref{gcoro} implies that it satisfies the hypothesis of Theorem \ref{convunifsemg2}.
%In this section, we improve such a results by showing that, in fact, we have that the semigroup converges uniformly in the whole right half-plane to the identity map, as the parameter goes to $0$. To this end, we need a characterization of the infinitesimal generators of the continuous semigroups in the class $\mathcal G$.
 
\begin{theorem}\label{gcoro}
Let $H:\C_+\to\overline{\C}_+$ be analytic. Then, the following statements are equivalent:
\begin{enumerate}[a)]
    \item $H$ is the infinitesimal generator of a continuous semigroup of elements in the class $\mathcal{G}$.
    \item $H\in \mathcal{D}\cap H^{\infty}(\C_{\varepsilon}),$ for all $\varepsilon>0$. 
    \item $H\in\mathcal{D}$.
\end{enumerate}
\end{theorem}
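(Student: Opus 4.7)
\textbf{Proof plan for Theorem \ref{gcoro}.}
The plan is to establish the cycle (b) $\Rightarrow$ (c) $\Rightarrow$ (a) $\Rightarrow$ (b). The first implication is immediate, since condition (c) is just a weakening of (b), so the real work is in the other two arrows.

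For (c) $\Rightarrow$ (a): Given $H\in\mathcal{D}$ with $H(\C_+)\subseteq\overline{\C}_+$, I would first invoke the half-plane version of Berkson--Porta (Theorem 2.6 of \cite{porta}, quoted in Section \ref{sec:semigroups}) to conclude that $H$ is the infinitesimal generator of a unique continuous semigroup $\{\Phi_t\}$ on $\C_+$ with Denjoy--Wolff point at infinity. The task is then to show that every $\Phi_t$ belongs to the Gordon--Hedenmalm class $\mathcal{G}$. Since $H(s)\to a_1$ as $\Re s\to +\infty$, evaluating the Cauchy problem \eqref{cauchy} at infinity gives $\Phi_t(s)-s\to a_1 t$, so the characteristic $c_{\Phi_t}=1$. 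For the Dirichlet series property of $\varphi_t:=\Phi_t-s$ I would run a Picard iteration, setting $\varphi_t^{(0)}\equiv 0$ and $\varphi_t^{(k+1)}(s):=\int_0^t H\bigl(s+\varphi_u^{(k)}(s)\bigr)\, du$. The key algebraic identity
\[
n^{-(s+\varphi(s))}\;=\;n^{-s}\,e^{-\varphi(s)\log n}
\]
together with the expansion of the exponential shows that if $\varphi\in\mathcal{D}$ then $H(s+\varphi(s))\in\mathcal{D}$, because the class is closed under sums, products and Dirichlet convolution. Uniform convergence of the iterates on half-planes $\C_\varepsilon$ then carries the Dirichlet structure to the limit, and since $c_{\Phi_t}=1\neq 0$ the extra shift clause in the definition of $\mathcal{G}$ is vacuous.

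For (a) $\Rightarrow$ (b): Given a continuous semigroup $\{\Phi_t\}$ in $\mathcal{G}$, Proposition 3.2 of \cite{noi} gives $c_{\Phi_t}=1$, so $\Phi_t(s)=s+\varphi_t(s)$ with $\varphi_t\in\mathcal{D}$. The generator $H(s)=\lim_{t\to 0^+}\varphi_t(s)/t$ exists uniformly on compacta of $\C_+$ and satisfies $H(\C_+)\subseteq\overline{\C}_+$ by \eqref{stress}. To produce an explicit Dirichlet expansion for $H$ I would extract the Bohr coefficients of each $\varphi_t$ through vertical-line averages
\[
a_n(t)\;=\;n^{\sigma}\lim_{T\to\infty}\frac{1}{T}\int_0^T \varphi_t(\sigma+i\tau)\, n^{i\tau}\, d\tau,
\]
on a half-plane of convergence common to all small $t$, and then show that $a_n(t)/t$ converges as $t\to 0^+$ to numbers $b_n$ that form the Dirichlet coefficients of $H$. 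The additional boundedness $H\in H^\infty(\C_\varepsilon)$ would follow by combining the uniform convergence $\Phi_t\to\mathrm{id}$ on $\C_\varepsilon$ (Theorem 1.2 of \cite{noi}) with Bohr's theorem equating the abscissas of uniform convergence and of boundedness of a Dirichlet series.

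The main obstacle is preserving the Dirichlet series structure under the non-linear flow $\partial_t\Phi_t=H(\Phi_t)$ and under the limit $t\to 0^+$. Making the Picard iteration of (c) $\Rightarrow$ (a) rigorous requires controlling the Dirichlet convolutions arising from $e^{-\varphi(s)\log n}$ uniformly in $n$ and in the iteration index, and passing to the limit in (a) $\Rightarrow$ (b) demands enough uniformity on vertical strips to upgrade pointwise convergence of Bohr coefficients to a bona fide Dirichlet expansion which is moreover uniformly bounded on every $\C_\varepsilon$. Both steps are places where the non-negative real part assumption on $H$ is expected to do real work, via Herglotz-type control of $\Re H$ on the boundary of $\C_+$.
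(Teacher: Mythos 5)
The paper does not prove this theorem: it is quoted verbatim from \cite[Theorem 5.1]{noi}, so there is no in-paper proof to compare your attempt against. Your cycle (b) $\Rightarrow$ (c) $\Rightarrow$ (a) $\Rightarrow$ (b) is a legitimate structure and (b) $\Rightarrow$ (c) is indeed trivial, but both substantive arrows have genuine gaps beyond the obstacles you flag.

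For (c) $\Rightarrow$ (a), the Picard iteration is a reasonable strategy, but you must control a common half-plane of uniform convergence across the iterates: each formal expansion of $n^{-(s+\varphi(s))}=n^{-s}e^{-\varphi(s)\log n}$ produces a double series whose reindexing into a single Dirichlet series needs absolute control uniform in $n$ and in the iteration index, and the assertion that a uniform limit of Dirichlet series on a half-plane is again in $\mathcal{D}$ itself relies on Bohr's theorem applied at the limit, which you should state rather than take for granted. For (a) $\Rightarrow$ (b), the hard content is $H\in H^\infty(\C_\varepsilon)$, and your sketch does not close it. Theorem 1.2 of \cite{noi} only gives $\sup_{\C_\varepsilon}|\Phi_t-\mathrm{id}|=o(1)$ as $t\to0^+$, which after dividing by $t$ does not bound $H$; you would need the quantitative rate $O(t)$, and establishing that is itself a theorem (it is essentially Theorem \ref{convunifsemg2} of the present paper, which \emph{uses} \ref{gcoro} as an input, so there is a circularity danger). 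Alternatively one can try to show directly that $H\in\mathcal{D}$ with $\Re H\ge 0$ on $\C_+$ forces $\sigma_b(H)\le 0$; this is true, but it is a nontrivial fact about Dirichlet series with positive real part (for instance via the Cayley transform $g=(H-1)/(H+1)$, which lands in $\mathcal{H}^\infty$ and lets you invoke Bohr's theorem, followed by an argument that $1-g$ is bounded away from zero on each $\C_\varepsilon$). Your pointwise extraction of Bohr coefficients $b_n=\lim_t a_n(t)/t$ does not by itself give that $\sum b_n n^{-s}$ converges, let alone uniformly on $\C_\varepsilon$; you need a summability estimate on the $a_n(t)$ uniform in $t$ to pass to the limit. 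As it stands the proposal names the right tools but leaves the two essential analytic points — reindexing/convergence in the iteration, and the abscissa-of-boundedness estimate for $H$ — as unverified assumptions.
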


\section{Continuous semigroups in the unit disc: proof of Theorem \ref{conunifelip}.}\label{sec:unitisc}

This section is mainly devoted to the proof of  Theorem \ref{conunifelip}.  We will need the following auxiliary elementary lemma. The first statement is well-known and the reader may find a proof of it in any Complex Analysis manual.

\begin{lemma}\label{lemapre3}
Let $p:\D\to\overline{\C_+}$ be an analytic function, $G(z)=(z-1)^2p(z)$, $z\in \D$, and $G_1:\C_+\to\overline{\C_+}$ given by $G_1(w)=2p((w-1)/(w+1))$, $w\in \C_{+}$. Then,  there exists a constant $M>0$ such that the following statements hold:
\begin{enumerate}
    \item[a)] For any $z\in \D$,
    \begin{equation*}
        |p(z)|\leq\frac{M}{1-|z|^2}.
    \end{equation*}
       \item[b)] Let $\lambda\in(0,1)$ and consider $z\in H_{\lambda}=D(\lambda,1-\lambda)$. Then
    \begin{equation*}
        |G(z)|\leq\frac{M}{\lambda}.
         \end{equation*}
    \item[c)] If $w=x+iy\in \C_{+}$, then
    \begin{equation*}
        |G_1(w)|\leq M\frac{(1+x)^2+y^2}{2x}.
    \end{equation*}
\end{enumerate}
\end{lemma}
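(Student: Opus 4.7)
The plan is to take statement $(a)$ as given --- it is the classical Herglotz--Poisson modulus bound for analytic functions with non-negative real part on $\D$, which yields $|p(z)|\le M_{1}/(1-|z|^{2})$ for some constant $M_{1}>0$ depending on $p$ --- and to derive $(b)$ and $(c)$ directly from it by short geometric computations. At the end I will take $M$ to be the largest of the constants that appear.

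For $(b)$, I would reduce to $(a)$ by proving the purely geometric ratio estimate
\[
|z-1|^{2}\le \frac{2}{\lambda}\bigl(1-|z|^{2}\bigr),\qquad z\in H_{\lambda}=D(\lambda,1-\lambda),
\]
after which multiplication by $|p(z)|$ and (a) give $|G(z)|\le 2M_{1}/\lambda$. The key observation is that $H_{\lambda}$ is internally tangent to $\partial\D$ at $z=1$, so both $1-|z|^{2}$ and $|z-1|^{2}$ are controlled by the single quantity $1-\Re z$. Concretely, expanding the defining inequality $|z-\lambda|^{2}<(1-\lambda)^{2}$ in real and imaginary parts produces
\[
1-|z|^{2}>2\lambda(1-\Re z)\quad\text{and}\quad (\Im z)^{2}<(1-\Re z)(1+\Re z-2\lambda)\le 2(1-\Re z),
\]
from which $|z-1|^{2}=(1-\Re z)^{2}+(\Im z)^{2}\le 4(1-\Re z)$; dividing by the first estimate gives the claimed ratio.

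For $(c)$, I would use the Cayley transform $z=(w-1)/(w+1)$, which sends $\C_{+}$ biholomorphically onto $\D$. The direct calculation
\[
|w+1|^{2}-|w-1|^{2}=(x+1)^{2}+y^{2}-(x-1)^{2}-y^{2}=4x
\]
gives $1-|z|^{2}=4x/\bigl((1+x)^{2}+y^{2}\bigr)$. Substituting into $(a)$ and multiplying by $2$ then yields $|G_{1}(w)|=2|p(z)|\le M_{1}\bigl((1+x)^{2}+y^{2}\bigr)/(2x)$, exactly the stated bound.

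I do not expect any real obstacle. Statement $(a)$ is classical and is used as a black box, and both $(b)$ and $(c)$ fall out of $(a)$ together with elementary planar geometry; the only mildly substantive step is the ratio estimate in $(b)$, which however is just two algebraic rearrangements of the disk inequality defining $H_{\lambda}$.
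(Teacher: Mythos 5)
Your proof is correct and follows essentially the same route as the paper: treat (a) as the classical Herglotz bound and derive (b) and (c) from it by elementary estimates relating $1-|z|^2$ to $|z-1|^2$ (for (b)) and to the Cayley transform identity $1-|z|^2=4x/((1+x)^2+y^2)$ (for (c)). The only (immaterial) difference is in (b), where the paper parametrizes $z=(1-\beta)e^{i\theta}+\beta$ on a horocircle internally tangent at $1$ and computes the exact ratio $|z-1|^2/(1-|z|^2)=(1-\beta)/\beta\le 1/\lambda$, avoiding your extra factor of $2$; since the lemma only asserts existence of some constant $M$, your version suffices.
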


\begin{proof}
Part $a)$ is a well-known result which follows from  Herglotz's representation formula.

Regarding statement $b)$, take $z\in D(\lambda,1-\lambda)$. Then there is $\beta>\lambda$ such that $|z-\beta|=1-\beta$. That is, $z=(1-\beta)e^{i\theta}+\beta$ for some $\theta\in (0,2\pi)$. A quick computation shows $\frac{|z-1|^{2}}{1-|z|^{2}}=\frac{1-\beta}{\beta}$. Therefore, by a), 
$$
|G(z)|=|z-1|^2|p(z)|\leq |z-1|^2\frac{M}{1-|z|^2}=M\frac{1-\beta}{\beta}\leq M\frac{1}{\beta}\leq M\frac{1}{\lambda}.
$$

Claim $c)$ also follows from $a)$. Indeed, 
\begin{align*}
    |G_1(w)|=2|p((w-1)/(w+1))|\leq 2M  \frac{|w+1|^{2}}{|w+1|^{2}-|w-1|^{2}}=2M\frac{(1+x)^2+y^2}{4x}.
\end{align*}
\end{proof}

\begin{proof}[Proof of the elliptic case in Theorem \ref{conunifelip}] Let $\{\Phi_t\}$ be an elliptic continuous semigroup on $\D$. Since $|\Phi_{t}(z)-z|\leq 2$, it is enough to get the thesis if $t$ is close to zero. By a standard argument,  we may assume that its Denjoy-Wolff point is zero. 
By the Schwarz Lemma and the algebraic structure of the semigroup, we have that for every $t>0$, $|\Phi_t(z)|\leq |z|$ and $|\Phi_t(z)|$ increases to $|z|$, as $t$ decreases to $0$, for all $z\in \D$.

We shall denote by $G$ the infinitesimal generator of the semigroup $\{\Phi_t\}$. It is known that $G(z)=-zp(z)$ (see Theorem \ref{BPformula}), where $p:\D\to\overline {\C_+}$ is analytic and there exists $M>0$ such that
\begin{equation*}
    |p(z)|\leq\frac{M}{1-|z|},\quad z\in\D.
\end{equation*}
Set $C_1:=\sup_{u\in D(0,1/2)}|G(u)|$. Take $z_0\in\D$ and define $t_1=\inf\{t>0:|\Phi_t(z_0)|\leq 1/2\}$. Notice that $t_1$ is $0$ if $|z_0|\leq 1/2$. Then, by the definition of $t_1$,
\begin{align*}
    |\Phi_t(z_0)-z_0|=\left|\int_0^t
    G(\Phi_{\tau}(z_0))d\tau
    \right|&\leq \int_0^{t_1}|G(\Phi_{\tau}(z_0))|d\tau+\int_{t_1}^t|G(\Phi_{\tau}(z_0))|d\tau\\
    &\leq \int_0^{t_1}|G(\Phi_{\tau}(z_0))|d\tau+tC_1.
\end{align*}
Therefore, we can suppose that $|z_0|>1/2$ and it suffices to bound the first term in the latter inequality. 

From now on, fix $z_0=re^{i\theta_0}$ with $r\in (1/2,1)$ and $\theta_0\in [0,2\pi]$. Consider the set
$$\mathcal{A}=\{z\in\D: \frac12 <|z|<1, -\pi/2+\theta_0<\text{arg}(z)<\pi/2+\theta_0\}.$$ 
Notice that $\inf\{|z-z_0|:\, |z|>1/2, z\notin \mathcal{A}\}\geq \sqrt{2}/2$. 

We claim that there exists $C=C(M)>0$ such that if $\Phi_u (z_0)\in \mathcal A$ for all $0\leq u< t$ then $|\Phi_{t}(z_0)-z_0|\leq C\sqrt{t}$.

Let us see that this is enough to conclude the proof and then we will prove the claim.
Take $t_2\leq t_1$ such that $\Phi_{t_2}(z_0)\in \partial \mathcal A$ and $\Phi_{u}(z_0)\in \mathcal A$ for all $u<t_2$. If $t_2=t_1$ we conclude the proof by using the claim. If $t_2<t_1$, then $|\arg(\Phi_{t_2}(z_0))-\theta_0|=\pi/2$ and $|\Phi_{t_2}(z_0)-z_0|\geq \sqrt 2/2$. Moreover, $|\Phi_{t_2}(z_0)-z_0|\leq C\sqrt{t_2}$. Thus, for $t_2<t\leq t_1$ we have
$$
|\Phi_{t}(z_0)-z_0|\leq 2\leq 2\sqrt 2 C\sqrt{t_2}< 2\sqrt 2 C\sqrt{t}.
$$
And the proof would be finished with the constant $2\sqrt 2 C$.

 Thus, let us prove the claim.
Let $T(z)=-\log(z)$, where $\log$ denotes the branch in $\mathcal A$ of the logarithm such that $\log(z_0)=\ln|z_0|+i\theta_0$, and set
$w_0=-\log(z_0)$. Notice that 
$$\mathcal{R}:=T(\mathcal{A})=\{w\in\C_+:0<\text{Re}(w)<\log2, \    -\pi/2-\theta_0<\Im (w)<\pi/2-\theta_0 \}.$$

Define $p_1:\C_{+}\to\overline{\C_+}$ by $p_1(w)=p(e^{-w})$. Hence, it is the infinitesimal generator of a continuous semigroup in $\C_{+}$ (see \cite[Theorem 2.6]{porta}). Let us denote such a semigroup by $\{\Psi_{t}\}$. Furthermore, the generator $p_1$ satisfies the hypothesis of Theorem \ref{convunifsem}. Indeed, by Lemma \ref{lemapre3}, there is $M>0$ such that 
\[
|p_1(w)|=|p(e^{-w})|\leq\frac{M}{1-e^{-\text{Re}(w)}}\leq\frac{2M}{\text{Re}(w)}, 
\]
whenever $0<\Re w<1$. Moreover, there exists $0<\varepsilon_0<1$ such that $|p(z)|<2M/\varepsilon_0$ if $|z|<e^{-1}$. Hence if $\varepsilon<\varepsilon_0$ and $w\in \C_\varepsilon$, we have $|p_1(w)|\leq 2M/\varepsilon$. 

Consider the curve  $\gamma:[0,t)\to\mathcal{R}$ given by $\gamma(t)=-\text{log} (\Phi_t(z_0))$. Then a simple computation yields
\[
\gamma'(t)=-\frac{G(\Phi_t(z_0))}{\Phi_t(z_0)}=p(\Phi_t(z_0))=p(\exp(-\gamma(t)))=p_{1}(\gamma(t)).
\]
This means that $\Psi_{t}(w_0)=\gamma(t)$. 
Therefore, applying Theorem \ref{convunifsem} and using the fact that the function $e^{-w}$ is Lipschitz in $\C_+$, there is a constant $A=A(M)$ and $t_{0}$ such that \[
A\sqrt{t}\geq |\Psi_{t}(w_0)- w_0|=| \text{log} (\Phi_t(z_0))- \text{log} (z_0)|\geq|\Phi_t(z_0)-z_0|,
\]
whenever $t<t_{0}$, and the claim follows.
\end{proof}

\begin{proof}[Proof of the non-elliptic case in Theorem \ref{conunifelip}] Take a non-elliptic continuous semigroup $\{\Phi_t\}$.  We may assume that its Denjoy-Wolff point is $1$ and again it is enough to get the thesis when $t$ is close to zero. Let us call $H$ the infinitesimal generator of the semigroup. By the Berkson-Porta decomposition (see Theorem \ref{BPformula}), there is $p:\D\to \overline{\C_{+}}$ holomorphic such that $H(z)=(z-1)^{2}p(z)$. Moreover, by  Lemma \ref{lemapre3}, there is $M>0$ such that, for all $0<\lambda<1$
    \begin{equation*}
        |H(z)|\leq\frac{M}{\lambda}, \quad  z\in D(\lambda,1-\lambda).
         \end{equation*}

Take $\lambda=\sqrt{t}\leq1/2$ and $z\in \D$. We recall that, by Julia's Lemma \cite[Theorem 1.4.7]{manoloal}, $H_{\lambda}=D(\lambda,1-\lambda)$ is invariant by the elements of the semigroup.  Write $t_0=t_0(z)=\inf\{u:\Phi_u(z)\in H_{\lambda} \}$. If $t_{0}<t$, then 
\[
|\Phi_t(z)-\Phi_{t_{0}}(z)|\leq\int_{t_{0}}^t|H(\Phi_{\tau}(z))|d\tau\leq M\frac{t-t_{0}}{\sqrt{t}}.
\]
Thus, if we prove that there is $C$ such that $|\Phi_u(z)-z|\leq C\sqrt{u}$ for all $u<t_{0}$, then
$$
|\Phi_t(z)-z|\leq \max{\{M,C\}}(\sqrt{t_{0}}+\frac{t-t_{0}}{\sqrt{t}})\leq 2\max{\{M,C\}}\sqrt{t}
$$ and the proof would be concluded. This means that we may assume that $t<t_{0}$, that is, we assume that the trajectory $y(u)=\Phi_u(z)$, $0\leq u\leq t$ lies outside the horodisc $H_{\lambda}$. 

Let $T:\D\to\C_+$, $T(s)=\frac{1+s}{1-s}$, $s\in \D$. We set $A=T(z)$ and $B=T(\Phi_t(z))$. Notice that $T(H_{\lambda})=\C_{\frac{\lambda}{1-\lambda}}$. Moreover, since we took $\lambda\leq1/2$, we also have that $\C_{\frac{\lambda}{1-\lambda}}\supset\C_{2\lambda}$.  Applying again Julia's Lemma, the map $u\mapsto \Re (\Phi_{u}(z))$ is non-decreasing. We know that $\text{Re}(B)-\text{Re}(A)\leq2\lambda$ and call $d=|\text{Im}(A)-\text{Im}(B)|$. We also may assume that $\Im (A)\leq \Im (\Phi_{u}(z))\leq \Im (A)+d=\Im (B)$, for all $0\leq u\leq t$.

Let $G_1(w)=2p(T^{-1}(w))$,  $w\in \C_{+}$.
Set $\gamma:[0,t)\to\C_+$, where $\gamma(u)=T(y(u))$.  $G_{1}$ is the infinitesimal generator of a semigroup in the right half-plane $\C_{+}$ and
$$
\gamma'(u)=T'(y(u))y'(u)=2p(y(u))=G_{1}(\gamma(u)),
$$
for all $u$. Thus, $\gamma$ is a trajectory of such infinitesimal generator.
Notice that  $\gamma^*=\gamma([0,t])$ lies in the vertical strip $\Omega_{2}=\{s\in\C_+:\text{Re}(s)<2\lambda \}$. Now, a simple computation shows that
\[
|\Phi_t(z)-z|=|T^{-1}(A)-T^{-1}(B)|=2\frac{|A-B|}{|1+A||1+B|}.
\]
Denote by $\rho$ the constant provided by Corollary \ref{Lavrentievnoi} and $M$ the constant associated with $p$ given in Lemma \ref{lemapre3}. Take $\eta\geq \max\{18, 24/\rho, 25 M 384/\rho\}$. If $ |\Phi_t(z)-z|<\eta\sqrt t$, then we are done. Assuming on the contrary that    $ |\Phi_t(z)-z|> \eta\sqrt t$,  we will get a contradiction. Notice that in such a case
\begin{equation}\label{negteor}
    |A-B|>\frac12\eta\sqrt{t}|1+A||1+B|\geq \frac{\sqrt{t}}{2}\eta =\lambda \eta/2.
\end{equation}
 Then, recalling that $\eta\geq 18$ and $\Re (A)-\Re (B)\leq 2\lambda$, 
 $$
 d^{2}> \frac{\lambda^{2}}{4}\eta^{2}-(\text{Re}(A)-\text{Re}(B))^{2}\geq  \frac{\lambda^{2}}{4}\eta^{2}-4\lambda ^{2}=\lambda ^{2}(\eta^{2}/4-4)
\geq 64 \lambda ^{2}.
 $$
 That is, 
 \begin{equation}\label{negteorbis}
 d>8\lambda \geq 4(\text{Re}(B)-\text{Re}(A)). 
 \end{equation}
 Once again, we argue as in the proof of Theorem  \ref{convunifsem}. Consider the square
 \[
\mathcal{C}=\{w\in\C_+:\text{Re}(A)<\text{Re}(w)<\text{Re}(A)+d, \  \text{Im}(A)< \text{Im}(w)< \text{Im}(A)+d\}.
\]
Let $w_0=A+(1+i)d/2$. Similarly, we  define the curve $\alpha:[0,t]\to\C_+$ given by $\alpha(u)=f(u)+ig(u)$, where $f(u)=\text{Re}(\gamma(u))$ and $g(u)=\min\{\text{Im}(\gamma(x)): x\geq u\}$. 
We shall also denote by $\gamma^*$ the set $\gamma([0,t])$ and by $\alpha^*$ the set $\alpha([0,t])$. The curve $\alpha^*$ divides the square $\mathcal{C}$ into two regions. Let $\Omega$ be the right hand-side region of the square.

 Then, we introduce the sets
\[
\mathcal{A}_0=\{s\in\alpha^*:s\not\in\gamma^*\},\quad \mathcal{A}_1=\{s\in\alpha^*\cap\gamma^*:|G_1(s)|\leq\frac{\rho}{3} \frac{|A-B|}{t}\}
\]
and  $\mathcal{A}=\mathcal{A}_0\cup\mathcal{A}_1$. For $E\subset\partial\Omega$, $\omega(E)=\omega_{\Omega}(w_0,E)$. Arguing as in the proof of Theorem \ref{convunifsem}, we get that $\omega(\alpha^*)\geq1/4$. We now estimate $\ell(\mathcal{A})$ in order to apply Corollary \ref{Lavrentievnoi}:
\begin{align*}
    \ell(\mathcal{A})&\leq \int_0^{t}\Re \, \alpha'(u) du + \int_{\alpha^{-1}(\mathcal A_1)}\Im\,  \alpha'(u) du\\
    &\leq2\lambda+\frac{\rho}{3} |A-B| \leq \left(\frac{4}{\eta}+\frac{\rho}{3}\right) |A-B|\leq 2 \left(\frac{4}{\eta}+\frac{\rho}{3}\right)d,
\end{align*}
where we have used \eqref{negteor} and \eqref{negteorbis}. Since $\eta>24/\rho$, then $\ell(\mathcal{A})\leq \rho d$.
Of course, $D(w_0,d/4)\subset \Omega$. By Corollary \ref{Lavrentievnoi}, we have that $\omega(\mathcal{A})<1/8$. Since $\omega(\alpha^*)\geq1/4$, we conclude that
\begin{equation}\label{18omega4}
    \omega(\alpha^*\setminus A)>\frac18.
\end{equation}
Using \eqref{18omega4}, both the definitions of the set $\mathcal{A}$ and of harmonic measure, and using that  $\text{Re}(\sqrt{G_1})$ is a positive harmonic function, we get 
\begin{equation*}
\begin{split}
 \sqrt{|G_1(w_0)|}&\geq \Re \sqrt{G_{1}(w_0)}=\int_{\partial\Omega}\text{Re}(\sqrt{G_{1}(s)})d\omega(s)\geq \int_{\alpha^*\setminus \mathcal{A}}\text{Re}(\sqrt{G_{1}(s)})d\omega(s) \\
 &\geq\int_{\alpha^*\setminus \mathcal{A}} \frac{\sqrt{|G_{1}(s)|}}{\sqrt{2}}d\omega(s) \geq  \frac{1}{\sqrt{2}}\sqrt{\frac{\rho}{3} \frac{|A-B|}{t} }\omega(\alpha^*\setminus \mathcal{A})\geq  \frac{1}{\sqrt{2}}\sqrt{\frac{\rho}{3} \frac{|A-B|}{t}} \frac18.
\end{split}
\end{equation*}
That is,
\begin{equation}\label{loweslav4}
|G_1(w_0)|\geq \frac{\rho}{384}\frac{|A-B|}{t}.
\end{equation}

Let us first assume that $1/2\leq |1+A||1+B|^{-1}\leq2$. Using both   \eqref{loweslav4} and \eqref{negteor}, we have that
\[
|G_1(w_{0})|>  \frac{\rho}{384} \frac12\eta \frac{1}{\sqrt{t}}|1+A||1+B|.
\]
We recall that $w_0=x+iy$ is the centre of the square $\mathcal{C}$. Now, $|1+A|\, |1+B|\geq N/2$, where 
 $N=\max(|1+A|^2,|1+B|^2)$. Then, since $\text{Im}(A)<y<\text{Im}(B)$, we have $N\geq y^2$. The definition of $x$ together with the fact that $d>8\lambda$ imply that
\begin{equation}\label{xd4}
   x<2\lambda+\frac{d}{2}\leq\frac34d<\frac32\max(|\text{Im}(A)|,|\text{Im}(B)|)\leq\frac32\sqrt N. 
\end{equation}
In particular, $1+x\leq \frac{5}{2}\sqrt N$. Using these estimates on $x$ and $y$, we find that $8N>y^2+(1+x)^2$. Therefore, 
$$
|G_1(w_0)|>  \frac{\rho}{384} \frac{1}{32}\eta \frac{1}{\sqrt{t}}(y^2+(1+x)^2)\geq \frac{\rho}{384} \frac{1}{16}\eta \frac{y^2+(1+x)^2}{x}
$$
where we have used $2\sqrt{t}=2\lambda<x$. Finally, Lemma \ref{lemapre3} c) gives
$$
|G_1(w_0)|>\frac{\rho}{384} \frac{1}{8M}\eta |G_1(w_{0})|.
$$
Since $\eta> 8M \frac{384}{\rho}$, we obtain a contradiction and, consequently, it cannot occur that $1/2\leq |1+A||1+B|^{-1}\leq2$.

Suppose now that $|1+B|>2|1+A|$. The case $2|1+B|<|1+A|$ is proven in a same fashion. Using Lemma \ref{lemapre3} we have that
\begin{equation}\label{consqlema}
 |G_1(w_0)|\leq M\frac{(1+x)^2+y^2}{2x}.   
\end{equation}
Notice that, by our assumption,
\[
|A-B|\geq|1+B|-|1+A|\geq\frac{|1+B|}{2}.
\]
Therefore, we have $x\geq d/2\geq|1+B|/8$ and
$$
|y|\leq \frac{|\Im \, A|+|\Im \, B|}{2}\leq \frac{|1+ A|+|1+B|}{2}<|1+B|. 
$$
Now, as $|1+B|>2|1+A|$,
\[
d=|\text{Im}(A)-\text{Im}(B)|\leq |\text{Im}(A)|+|\text{Im}(B)|
\leq|1+A|+|1+B|\leq\frac32|1+B|.
\]
Eventually, using again the fact that $x\leq\frac34d$ (see \eqref{xd4}), we have $x\leq 9|1+B|/8$. Using these last three estimates in \eqref{consqlema}, we find that
\[
|G_1(w_0)|\leq M\frac{(1+x)^2+y^2}{2x}<25M |1+B|.
\]
On the other hand, using again \eqref{loweslav4}  and  \eqref{negteor}, we have that
\[
|G_1(w_0)|>  \frac{\rho}{384} \frac12\eta \frac{1}{\sqrt{t}}|1+A||1+B|\geq \frac{\rho}{384} \eta |1+A||1+B|\geq \frac{\rho}{384} \eta |1+B| ,
\]
where we have used that $\sqrt{t}\leq1/2$. Since $\eta> 25 M \frac{384}{\rho}$, we reach the desired contradiction.
\end{proof}

To finish we provide an example of how the statement of Theorem \ref{conunifelip} cannot be strengthened in the sense that $t^{1/2}$ cannot  be replaced  by another function that goes to zero faster than  $t^{1/2}$.
\begin{example} \label{Ex:cotaoptima}
Consider $p:\D\to\C_+$, $p(z)=(1+z)^{-1}$, $z\in \D$ and  the infinitesimal generator given by
\[
H(z)=(1-z)^{2}p(z)=\frac{(1-z)^2}{1+z}, \quad z\in \D.
\]
Let $\{\Phi_t\}$ be its continuous semigroup. Its Denjoy-Wolff point is $1$ and since $H(x)$ is real whenever $x\in (-1,1)$, we have that $\Phi_{t}(x)\in (-1,1)$ for all $x\in (-1,1)$.

Consider the vector field $G(z)=\frac{1}{1+z}$, with $z\in (-1,0)$. For $x\in (-1,0)$ fixed, the solution of the initial value problem 
\[
y'(t)=G(y(t)),\quad  y(0)=x
\]
is given by $y_{x}(t)=\sqrt{2t+(1+x)^{2}}-1$ for all $t>0$.

We claim that for every $t>0$ and $x<0$ such that $\Phi_t(x)<0$, we necessarily have that $\Phi_t(x)\geq y_x(t)$. Fix $t>0$ and $x<0$  so that $\Phi_t(x)<0$. Then, we have that $\Phi_s(x)<0$ for all $0\leq s\leq t$. Define  $e(u)= y_{x}(u)-\Phi_{u}(x)$. Observe that $e(0)=0$ and 
\begin{equation}\label{derivada}
    e'(u)=
\frac{\partial}{\partial t}\left(y_x(t)
    -
   \Phi_t(x)\right)\Big\lvert_{t=u}
    =G(y_x(u))
    -
     H(\Phi_{u}(x)).
\end{equation}
Since $H(z)>G(z)$ for all $z\in  (-1,0)$, we have that $e'(0)<0$. This, together with the fact that $e(0)=0$, imply that $e(u)<0$ for $u$ small enough. This is, $\Phi_u(x)\geq y_x(u)$ for $u$ small enough.

\ 
Let us now suppose the existence of $u<t$ such that $y_x(u)>\Phi_u(x)$. Define
\[
t_1=\inf\{u>0: \Phi_u(x)<0, \ e(u)=0\}. 
\]
The assumption on $u$ and the fact that $e(w)<0$ for $w$ small enough imply that $t_1<t$. Thus, for $u<t_1$, $e(u)<0$. Then, $e'(t_1)\geq0$. But,
\[
e'(t_1)=G(y_x(t_1))-H(\Phi_{t_1}(x))<0
\]
since $H(u)>G(u)$ for all $u\in(-1,0)$. A contradiction. Therefore, $\Phi_{t}(x)\geq y_{x}(t)$ for  all $x\in (-1,0)$ and all $t$ such that $\Phi_{t}(x)<0$.
%This forces the left derivative to be also negative so, for the limit of the incremental quotient to be strictly negative, we need that $e(t_1+h)>0$ for $h<0$ small enough. Then, the function $e(u)$ is negative for $u$ small enough and it is nonnegative for $u<t_1$, $u$ close to $t_1$. Then, by Bolzano's theorem, there exists a point $t_0<t_1$ such that $e(t_0)=0$. However, this contradicts the definition of $t_1$, preventing the existence of a poinr $u<t$ such that $\Phi_u(x)<0$ and $y_x(u)>\Phi_u(x)$. 

\ 
Notice that the curve $t\mapsto \Phi_t(-1/2)$ is contained in the interval $(-1,1)$ and $\lim_{t\to\infty}\Phi_t(-1/2)=1$. Then, we can take $t_{0}$ such that $\Phi_{t_{0}}(-1/2)=0$. Then $\Phi_{t}(x)<0$ for all $x<-1/2$ and $t<t_{0}$. Thus, for $t<t_{0}$
\begin{equation*}
\begin{split}
\sup_{z\in \D}|\Phi_{t}(z)-z|&\geq \sup_{x<-1/2}|\Phi_{t}(x)-x|=\sup_{x<-1/2}(\Phi_{t}(x)-x)\\
&\geq \sup_{x<-1/2}(y_{x}(t)-x)=\sup_{x<-1/2}(\sqrt{2t+(1+x)^{2}}-1-x).
\end{split}
\end{equation*}
If $x<\sqrt t/2-1$, then $\sqrt{2t+(1+x)^{2}}-1-x\geq\sqrt t$. Therefore, $$\sup_{z\in \D}|\Phi_{t}(z)-z|\geq \sqrt t,\quad \text{for all $t<t_{0}$},$$ and the claim follows.
\end{example}

\end{document}